\newtheorem{thm}{Theorem}[section]
\newtheorem{lem}[thm]{Lemma}
\newtheorem{prop}[thm]{Proposition}
\theoremstyle{definition}
\theoremstyle{remark}
\theoremstyle{Proof}
\numberwithin{equation}{section}
\author[O. Ajebbar]{ Ajebbar Omar}
\address{ Ajebbar Omar \\Department of Mathematics\\
Ibn Zohr University, Faculty of Sciences, Agadir\\
Morocco.} \email{omar-ajb@hotmail.com}
\author[E. Elqorachi]{  Elqorachi Elhoucien}
\address{Elqorachi Elhoucien \\Department of Mathematics\\
Ibn Zohr University, Faculty of Sciences, Agadir\\
Morocco.} \email{elqorachi@hotmail.com}
\author[Th. M. Rrassias]{Themistocles M. Rassias}
\address{ Themistocles M. Rassias,\\Department of Mathematics,\\ National Technical University of Athens\\Zofrafou Campus, 15780 Athens, Greece. }
\email{ trassias@math.ntua.gr}
\begin{document}

\title[The stability of a Cosine-Sine Functional
equation]{The stability of a Cosine-Sine Functional Equation on
abelian groups}

\keywords{Hyers-Ulam stability; Group; Amenable group; Cosine
equation; Sine equation; Additive function; Multiplicative
function.}
\thanks{2010 Mathematics Subject
Classification. Primary 39B82; Secondary 39B32}
\begin{abstract}
In this paper we establish the stability of the functional equation
\begin{equation*}f(x-y)=f(x)g(y)+g(x)f(y)+h(x)h(y)),\;\; x,y \in G, \end{equation*}where $G$ is an abelian group.
\end{abstract}
\maketitle
\section{Introduction}
In many studies concerning functional equations related to the
Cauchy equation $f(xy)=f(x)f(y)$, the main tool is a kind of
stability problem inspired by the famous problem proposed in 1940 by
Ulam \cite{Ulam}.\\More precisely, given a group $G$ and a metric
group $H$ with metric $d$, it is asked if for every function
$f:G\rightarrow H$, such that the function $(x,y)\mapsto
f(xy)-f(x)f(y)$ is bounded, there exists a homomorphism
$\chi:G\rightarrow H$ such that the function $x\mapsto
d(f(x),\chi(x))$ is bounded.\\The first affirmative answer to Ulam's
question was given in 1941 by Hyers \cite{Hyers}, under the
assumption that $G$ and $H$ are Banach spaces.\\After Hyers's result
a great number of papers on the subject have been published,
generalizing Ulam's problem and Hyers's result in various
directions. The interested reader should refer to \cite{Brzdek,
Caradiu, Gavruta, Moslehian, M.Rassias, Rassias} for a thorough
account on the subject of the stability of functional equations.\\In
this paper we will investigate the stability problem for the
trigonometric functional equation
\begin{equation}\label{eq1}f(x-y)=f(x)g(y)+g(x)f(y)+h(x)h(y),\,x,y\in G\end{equation}
on abelian groups.\\In \cite{Székelyhidi} Sz\'{e}kelyhidi proved the
Hyers-Ulam stability for the functional equation
$$f(xy)=f(x)g(y)+g(x)f(y),\,x,y\in G$$ and cosine
functional equation $$g(xy)=g(x)g(y)-f(x)f(y),\,x,y\in G$$ on
amenable group $G$. Chung, choi and Kim \cite{Chung} poved the
Hyers-Ulam stability of $$f(x+\sigma(y))=f(x)g(y)-g(x)f(y),\,x,y\in
G$$ where
$\sigma:G\rightarrow G$ is an involution of the abelian group $G$.\\
Recently, in \cite{Ajebbar and Elqorachi2,Ajebbar and Elqorachi3}
the authors obtained the stability of the functional equations
$$f(xy)=f(x)g(y)+g(x)f(y)+h(x)h(y),\,x,y\in G,$$
$$f(x\sigma(y))=f(x)g(y)+g(x)f(y),\,x,y\in G,$$
$$f(x\sigma(y))=f(x)f(y)-g(x)g(y),\,x,y\in G$$
and $$f(x\sigma(y))=f(x)g(y)-g(x)f(y),\,x,y\in G$$ on amenable
groups, and where $\sigma:G\rightarrow G$ is an involutive
automorphism.\\The aim of the present paper is to extend the
previous  results to the functional equation (\ref{eq1}) on abelian
groups.
\section{Definitions and Notations}
Throughout this paper $(G,+)$ denotes an abelian group with the
identity element $e$. We denote by $\mathcal{B}(G)$ the linear space
of all bounded complex-valued
functions on $G$.\\
Let $\mathcal{V}$ be a linear space of complex-valued functions on
$G$. We say that the functions
$f_{1},\cdot\cdot\cdot,f_{n}:G\rightarrow\mathbb{C}$ are linearly
independent modulo $\mathcal{V}$ if
$\lambda\,_{1}\,f_{1}+\cdot\cdot\cdot+\lambda\,_{n}\,f_{n}\in
\mathcal{V}$ implies that
$\lambda\,_{1}=\cdot\cdot\cdot=\lambda\,_{n}=0$ for any
$\lambda\,_{1},\cdot\cdot\cdot,\lambda\,_{n}\in \mathbb{C}$. We say
that the linear space $\mathcal{V}$ is two-sided invariant if $f\in
\mathcal{V}$ implies that the function $x\mapsto f(x+y)$
belongs to $\mathcal{V}$ for any $y\in G$.\\
If $I$ is the identity map of $G$ we say that $\mathcal{V}$ is
$(-I)$-invariant if $f\in\mathcal{V}$ implies that the function
$x\mapsto f(-x)$ belongs to $\mathcal{V}$.\\The space
$\mathcal{B}(G)$ is an obvious example of a linear space of
complex-valued functions on $G$ which is two-sided invariant and
$(-I)$-invariant.\\Let $f:G\rightarrow\mathbb{C}$ be a function. We
denote respectively by $f^{e}(x):=\frac{f(x)+f(-x)}{2},\,x\in G$ and
$f^{o}(x):=\frac{f(x)-f(-x)}{2},\,x\in G$ the even part and the odd
part of $f$.
\section{Basic results}
In this section we present some general stability properties of the
functional equation (\ref{eq1}).\\
Throughout this section we let $\mathcal{V}$ denote a two-sided
invariant and $(-I)$-invariant linear space of complex-valued
functions on $G$.
\begin{lem}Let $f,g,h:G\rightarrow\mathbb{C}$ be functions. Suppose that the functions
\begin{equation*}x\mapsto f(x-y)-f(x)g(y)-g(x)f(y)-h(x)h(y)\end{equation*}
and
\begin{equation}\label{eq2}x\mapsto f(x-y)-f(y-x)\end{equation}
belong to $\mathcal{V}$ for all $y\in G$, then\\
(1) $f^{o}\in\mathcal{V}$.\\
(2) The following functions
$\varphi_{1},\varphi_{2},\varphi_{3}:G\times G\rightarrow\mathbb{C}$
\begin{equation}\label{eq3}
f^{e}(x)g^{o}(y)+g^{e}(x)f^{o}(y)+h^{e}(x)h^{o}(y)=\varphi_{1}(x,y),\end{equation}
\begin{equation}\label{eq4}g^{o}(x)f^{e}(y)+h^{o}(x)h^{e}(y)=\varphi_{2}(x,y),\end{equation}
\begin{equation}\label{eq5}
f(x+y)-f(x-y)+2f^{o}(x)g^{o}(y)+2g^{o}(x)f^{o}(y)+2h^{o}(x)h^{o}(y)=\varphi_{3}(x,y)\end{equation}
are such that the functions $x\mapsto\varphi_{1}(x,y)$,
$x\mapsto\varphi_{1}(y,x)$, $x\mapsto\varphi_{2}(x,y)$,
$x\mapsto\varphi_{3}(x,y)$ and $x\mapsto\varphi_{3}(y,x)$ belong to
$\mathcal{V}$ for all $y\in G$.
\end{lem}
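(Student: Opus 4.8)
The plan is to dispose of part (1) directly and to obtain all five assertions in part (2) from a single parity symmetrization of the defect. Throughout, write
\[
\Phi(x,y):=f(x-y)-f(x)g(y)-g(x)f(y)-h(x)h(y),\qquad \Psi(x,y):=f(x-y)-f(y-x),
\]
so that by hypothesis $x\mapsto\Phi(x,y)$ and $x\mapsto\Psi(x,y)$ lie in $\mathcal{V}$ for every fixed $y\in G$. For (1) I would simply set $y=e$ in $\Psi$: since $\Psi(x,e)=f(x)-f(-x)=2f^{o}(x)$ and $x\mapsto\Psi(x,e)\in\mathcal{V}$, it follows that $f^{o}\in\mathcal{V}$. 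This fact is the engine for the rest.

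For (2) the key remark is that, because $\mathcal{V}$ is $(-I)$-invariant and $x\mapsto\Phi(x,y')\in\mathcal{V}$ for every $y'$, all four maps $x\mapsto\Phi(\pm x,\pm y)$ belong to $\mathcal{V}$ for each fixed $y$; hence so does every linear combination of them. I would therefore form the three parity components that are odd--even, even--odd and odd--odd in $(x,y)$, i.e. the combinations $\tfrac14[\Phi(x,y)\mp\Phi(-x,y)\pm\Phi(x,-y)\mp\Phi(-x,-y)]$ with appropriate signs. On expanding, every product term separates into a single parity monomial, while the only subtle piece is $f(x-y)$: under $x\mapsto-x$ and $y\mapsto-y$ its argument runs through $x-y,\,-x-y,\,x+y,\,-x+y$, and writing $f=f^{e}+f^{o}$ the four contributions collapse to $\tfrac12(f^{o}(x-y)+f^{o}(x+y))$, $\tfrac12(f^{o}(x-y)-f^{o}(x+y))$ and $\tfrac12(f^{e}(x-y)-f^{e}(x+y))$ in the three components respectively.

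This produces identities of the form
\[
\Phi^{oe}(x,y)=\tfrac12[f^{o}(x-y)+f^{o}(x+y)]-g^{e}(y)f^{o}(x)-\big[g^{o}(x)f^{e}(y)+h^{o}(x)h^{e}(y)\big],
\]
\[
\Phi^{eo}(x,y)=\tfrac12[f^{o}(x-y)-f^{o}(x+y)]-\big[f^{e}(x)g^{o}(y)+g^{e}(x)f^{o}(y)+h^{e}(x)h^{o}(y)\big],
\]
\[
\Phi^{oo}(x,y)=\tfrac12[f^{e}(x-y)-f^{e}(x+y)]-\big[f^{o}(x)g^{o}(y)+g^{o}(x)f^{o}(y)+h^{o}(x)h^{o}(y)\big].
\]
Now I invoke (1) together with two-sided invariance: $f^{o}\in\mathcal{V}$ gives $x\mapsto f^{o}(x\pm y)\in\mathcal{V}$ and $x\mapsto g^{e}(y)f^{o}(x)\in\mathcal{V}$. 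Since the left-hand sides lie in $\mathcal{V}$ in $x$, subtracting the $f^{o}$-terms isolates $\varphi_{2}(x,y)$ from the first identity and $\varphi_{1}(x,y)$ from the second; multiplying the third by $-2$ and correcting by $f^{o}(x+y)-f^{o}(x-y)\in\mathcal{V}$ isolates $\varphi_{3}(x,y)$. Hence $x\mapsto\varphi_{1}(x,y)$, $x\mapsto\varphi_{2}(x,y)$ and $x\mapsto\varphi_{3}(x,y)$ all lie in $\mathcal{V}$.

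The two swapped assertions then follow by symmetry once the diagonal ones are in hand: one checks $\varphi_{1}(y,x)=g^{e}(y)f^{o}(x)+\varphi_{2}(x,y)$, whose two summands lie in $\mathcal{V}$ in $x$, and, using that $G$ is abelian together with the hypothesis on $\Psi$, $\varphi_{3}(y,x)=\varphi_{3}(x,y)+\Psi(x,y)$, again a sum of $\mathcal{V}$-functions. I expect the main obstacle to be the clean bookkeeping of the mixed term $f(x-y)$ under the fourfold symmetrization: every other term is a genuine product that splits by parity, whereas $f(x-y)$ yields the translated pieces $f^{o}(x\pm y)$ and $f^{e}(x\pm y)$. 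The whole argument works precisely because each residual term is a translate or a scalar multiple of $f^{o}$, hence lies in $\mathcal{V}$ by (1) and invariance, and everything else is cancellation; the abelian structure and the hypothesis $\Psi(\cdot,y)\in\mathcal{V}$ enter only at the last step, to pass from $\varphi_{i}(x,y)$ to $\varphi_{i}(y,x)$.
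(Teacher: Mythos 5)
Your argument is correct, and at bottom it is the same fourfold sign-symmetrization that the paper carries out: your even--odd component identity is exactly the paper's formula (\ref{eq13}) for $\varphi_{1}$ (the paper reaches it in two stages, first trading $f$ for $f^{e}$ via $\phi(x,y)=\psi(x,y)-f^{o}(x-y)+f^{o}(x)g(y)$ and then evaluating at $(\pm x,\pm y)$), and your ``multiply $\Phi^{oo}$ by $-2$ and correct by $f^{o}(x+y)-f^{o}(x-y)$'' is the paper's (\ref{eq16}) in disguise. Where you genuinely diverge is in the order of deductions for the swapped assertions, and your order is the cleaner one: the paper extracts only $\varphi_{1}$ from the symmetrization and then sets $\varphi_{2}(x,y)=\varphi_{1}(y,x)-f^{o}(x)g^{e}(y)$, so its membership claims for $x\mapsto\varphi_{1}(y,x)$, $\varphi_{2}$ and $x\mapsto\varphi_{3}(y,x)$ all rest on the unstated swap identity $\psi(y,x)=\psi(x,y)-[f(x-y)-f(y-x)]$ together with $(-I)$-invariance, all of which is compressed into the closing sentence ``we deduce the rest of the proof''; you instead read $\varphi_{2}$ directly off the odd--even component $\Phi^{oe}$ (no swap needed there), recover $\varphi_{1}(y,x)=\varphi_{2}(x,y)+g^{e}(y)f^{o}(x)$ from it, and dispose of $\varphi_{3}(y,x)=\varphi_{3}(x,y)+[f(x-y)-f(y-x)]$ by symmetry of the product terms, which makes the exact point where hypothesis (\ref{eq2}) is needed completely explicit---a detail the paper leaves to the reader. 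One small correction to your closing remark: it is not quite true that (\ref{eq2}) and commutativity ``enter only at the last step,'' since (\ref{eq2}) at $y=e$ is what yields part (1), on which all your $f^{o}$-corrections depend, and commutativity is already used silently when you collapse $f(-x-y)$ into $f^{e}$ and $f^{o}$ evaluated at $x+y$; both uses are legitimate here because $G$ is abelian by standing assumption, so this affects only the commentary, not the proof.
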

\begin{proof}
By putting $y=e$ in (\ref{eq2}) we get that the function $x\mapsto
f(x)-f(-x)$ belongs to $\mathcal{V}$ which proves $(1)$.\\
Let $\psi$ be the function defined on $G\times G$ by
\begin{equation}\label{eq6}\psi(x,y)=f(x-y)-f(x)g(y)-g(x)f(y)-h(x)h(y).\end{equation}
From (\ref{eq6}) we can verify easily that
\begin{equation}\label{eq7}\psi(x,y)=f^{e}(x-y)+f^{o}(x-y)-f^{e}(x)g(y)-f^{o}(x)g(y)-g(x)f(y)-h(x)h(y).\end{equation}
Now let
\begin{equation}\label{eq8}\phi(x,y):=\psi(x,y)-f^{o}(x-y)+f^{o}(x)g(y).\end{equation}
Then by using (\ref{eq7}) and (\ref{eq8}) we get
\begin{equation}\label{eq9}f^{e}(x-y)=f^{e}(x)g(y)+g(x)f(y)+h(x)h(y)+\phi(x,y).\end{equation}
Since $f^{e}$ is an even function on the abelian group $G$ then we
have
$$f^{e}(x-y)=f^{e}(-(x-y))=f^{e}((-x)-(-y)).$$ Hence, by
applying (\ref{eq9}) to the pair $(-x,-y)$, we obtain
\begin{equation}\label{eq10}f^{e}(x-y)=f^{e}(x)g(-y)+g(-x)f(-y)+h(-x)h(-y)+\phi(-x,-y).\end{equation} When we
subtract equation (\ref{eq10}) from (\ref{eq9}) we get that
\begin{equation}\label{eq11}\begin{split}&2f^{e}(x)g^{o}(y)+g(x)f(y)-g(-x)f(-y)+h(x)h(y)-h(-x)h(-y)\\
&=\phi(-x,-y)-\phi(x,y).\end{split}\end{equation} For the pair
$(-x,y)$ the identity (\ref{eq11}) becomes
\begin{equation}\label{eq12}\begin{split}&2f^{e}(x)g^{o}(y)+g(-x)f(y)-g(x)f(-y)+h(-x)h(y)-h(x)h(-y)\\
&=\phi(x,-y)-\phi(-x,y).\end{split}\end{equation} By adding
(\ref{eq11}) and (\ref{eq12}) we obtain
\begin{equation*}\begin{split}&4f^{e}(x)g^{o}(y)+2g^{e}(x)[f(y)-f(-y)]+2h^{e}(x)[h(y)-h(-y)]\\
&=\phi(-x,-y)-\phi(x,y)+\phi(x,-y)-\phi(-x,y).\end{split}\end{equation*}
Hence the identity (\ref{eq3}) can be written as follows
where\begin{equation*}\varphi_{1}(x,y):=\frac{1}{4}[\phi(-x,-y)-\phi(x,y)+\phi(x,-y)-\phi(-x,y)].\end{equation*}
By using (\ref{eq8}) and the identity above we get, by an elementary
computation, that
\begin{equation}\label{eq13}\varphi_{1}(x,y)=\frac{1}{4}[\psi(-x,-y)-\psi(x,y)+\psi(x,-y)-\psi(-x,y)+2f^{o}(x-y)-2f^{o}(x+y)].\end{equation}
By interchanging  $x$ and $y$ in (\ref{eq3}) we obtain
$$g^{o}(x)f^{e}(y)+f^{o}(x)g^{e}(y)+h^{o}(x)h^{e}(y)=\varphi_{1}(y,x),$$
and then we get
\begin{equation}\label{eq14}\varphi_{2}(x,y):=\varphi_{1}(y,x)-f^{o}(x)g^{e}(y),\,x,y\in G.\end{equation}
On the other hand, by replacing $y$ by $-y$ in (\ref{eq6}) we get
that
\begin{equation}\label{eq15}\psi(x,-y)=f(x+y)-f(x)g(-y)-g(x)f(-y)-h(x)h(-y).\end{equation}
By subtracting the result of equation (\ref{eq6}) from the result of
equation (\ref{eq15}) we obtain
\begin{equation*}\begin{split}&f(x+y)-f(x-y))=-2f(x)g^{o}(y)-2g(x)f^{o}(y)-2h(x)h^{o}(y)\\&+\psi(x,-y)-\psi(x,y)\\
&=-2f^{e}(x)g^{o}(y)-2f^{o}(x)g^{o}(y)-2g^{e}(x)f^{o}(y)-2g^{o}(x)f^{o}(y)\\
&-2h^{e}(x)h^{o}(y)-2h^{o}(x)h^{o}(y)+\psi(x,-y)-\psi(x,y)\\
&=-2f^{o}(x)g^{o}(y)-2g^{o}(x)f^{o}(y)-2h^{o}(x)h^{o}(y)\\
&-2[f^{e}(x)g^{o}(y)+g^{e}(x)f^{o}(y)+h^{e}(x)h^{o}(y)]
+\psi(x,-y)-\psi(x,y)\\
&=-2f^{o}(x)g^{o}(y)-2g^{o}(x)f^{o}(y)-2h^{o}(x)h^{o}(y)-2\varphi_{1}(x,y)+\psi(x,-y)-\psi(x,y).\end{split}\end{equation*}
So that the identity (\ref{eq5}) can be written as follows
\begin{equation}\label{eq16}
\varphi_{3}(x,y):=-2\varphi_{1}(x,y)+\psi(x,-y)-\psi(x,y).\end{equation}
Sine $x$ and $y$ are arbitrary, by using the fact that the functions
$x\mapsto\psi(x,y)$, $x\mapsto f(x-y)-f(y-x)$ and $f^{o}$ belong to
the two-sided invariant and $(-I)$-invariant linear space
$\mathcal{V}$ of complex-valued functions on $G$  for all $y\in G$,
and taking (\ref{eq13}), (\ref{eq14}) and (\ref{eq16}) into account,
we deduce the rest of the proof.
\end{proof}
\begin{lem}Let $f,g,h:G\rightarrow\mathbb{C}$
be functions. Suppose that $f$ and $h$ are linearly independent
modulo $\mathcal{V}$, and that $h^{o}\not\in\mathcal{V}$. If the
functions
\begin{equation*}x\mapsto f(x-y)-f(x)g(y)-g(x)f(y)-h(x)h(y)\end{equation*}
and
\begin{equation*}x\mapsto f(x-y)-f(y-x)\end{equation*}
belong to $\mathcal{V}$ for all $y\in G$, then\\
(1)
\begin{equation}\label{eq17}h^{e}=\gamma\,f^{e}\end{equation} and
\begin{equation}\label{eq18}g^{o}=-\gamma\,h^{o}-\eta\,f^{o},\end{equation}
where $\gamma\,,\eta\in\mathbb{C}$ are constants.\\
(2) Moreover, if $f^{o}\neq0$, then
\begin{equation}\label{eq19}g^{e}=\eta\, f^{e}+\varphi,\end{equation}
where $\varphi\in\mathcal{V}$ and $\varphi(-x)=\varphi(x)$ for all
$x\in G$.
\end{lem}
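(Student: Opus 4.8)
The plan is to feed the two bilinear identities (\ref{eq3}) and (\ref{eq4}) of the preceding lemma into the non-degeneracy hypotheses. First I would record the observation that $f^{e}\notin\mathcal{V}$: since $f^{o}\in\mathcal{V}$ by part (1) of the previous lemma, $f^{e}\in\mathcal{V}$ would give $f=f^{e}+f^{o}\in\mathcal{V}$, contradicting the linear independence of $f$ and $h$ modulo $\mathcal{V}$ (take the coefficient of $f$ equal to $1$). In particular $f^{e}\neq 0$.

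Reading (\ref{eq4}) as a statement about the variable $x$ for each fixed $y$, the previous lemma tells me that $f^{e}(y)\,g^{o}+h^{e}(y)\,h^{o}\in\mathcal{V}$ for every $y\in G$. If $g^{o}$ and $h^{o}$ were linearly independent modulo $\mathcal{V}$ this would force $f^{e}(y)=0$ for all $y$, i.e. $f^{e}=0$, which we just excluded; hence $g^{o}$ and $h^{o}$ are linearly dependent modulo $\mathcal{V}$. Because $h^{o}\notin\mathcal{V}$, any nontrivial dependence relation must involve $g^{o}$ with a nonzero coefficient, so I can solve for $g^{o}$ and write $g^{o}=-\gamma\,h^{o}+w$ with $\gamma\in\mathbb{C}$ and $w\in\mathcal{V}$. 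Substituting this back into the relation $f^{e}(y)\,g^{o}+h^{e}(y)\,h^{o}\in\mathcal{V}$ and absorbing the term $f^{e}(y)\,w\in\mathcal{V}$ leaves $\big(h^{e}(y)-\gamma f^{e}(y)\big)h^{o}\in\mathcal{V}$; since $h^{o}\notin\mathcal{V}$ this yields $h^{e}(y)=\gamma f^{e}(y)$ for all $y$, which is (\ref{eq17}).

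To finish (\ref{eq18}) and to obtain (\ref{eq19}) I would turn to (\ref{eq3}), again read in $x$ for fixed $y$: the previous lemma gives $g^{o}(y)\,f^{e}+f^{o}(y)\,g^{e}+h^{o}(y)\,h^{e}\in\mathcal{V}$. Inserting $h^{e}=\gamma f^{e}$ and using $g^{o}(y)+\gamma h^{o}(y)=w(y)$ collapses this to $w(y)\,f^{e}+f^{o}(y)\,g^{e}\in\mathcal{V}$ for every $y$. If $f^{o}=0$ then, since $f^{e}\notin\mathcal{V}$, this forces $w=0$, so $g^{o}=-\gamma h^{o}$ and (\ref{eq18}) holds with any $\eta$. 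If $f^{o}\neq 0$, I fix $y_{0}$ with $f^{o}(y_{0})\neq 0$ and solve the displayed membership at $y_{0}$ for $g^{e}$, which gives $g^{e}=\eta f^{e}+\varphi$ with $\eta:=-w(y_{0})/f^{o}(y_{0})$ and $\varphi\in\mathcal{V}$; as $\varphi=g^{e}-\eta f^{e}$ is even, this is exactly (\ref{eq19}). Feeding this expression for $g^{e}$ back into $w(y)\,f^{e}+f^{o}(y)\,g^{e}\in\mathcal{V}$ and again using $f^{e}\notin\mathcal{V}$ produces $w(y)+\eta f^{o}(y)=0$, i.e. $w=-\eta f^{o}$, so that $g^{o}=-\gamma h^{o}-\eta f^{o}$, completing (\ref{eq18}).

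The delicate point throughout is not any single computation but the repeated use of the ``one nonzero coefficient kills the whole combination'' principle: each conclusion rests on knowing that a specific function ($f^{e}$ or $h^{o}$) lies outside $\mathcal{V}$, so I must keep track of the hypotheses $h^{o}\notin\mathcal{V}$ and $f^{e}\notin\mathcal{V}$ at every step. The one genuine subtlety is that the \emph{same} constant $\eta$ must serve both (\ref{eq18}) and (\ref{eq19}); the order above resolves this by defining $\eta$ from the even identity (\ref{eq19}) and only then verifying that it also governs the odd identity (\ref{eq18}).
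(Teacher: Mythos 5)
Your proof is correct and follows essentially the same route as the paper's: obtain $g^{o}=-\gamma\,h^{o}+w$ with $w\in\mathcal{V}$ from (\ref{eq4}), substitute back and use $h^{o}\not\in\mathcal{V}$ to force $h^{e}=\gamma\,f^{e}$, then feed this into (\ref{eq3}) and use $f^{e}\not\in\mathcal{V}$ to extract $g^{e}=\eta\,f^{e}+\varphi$ and $w=-\eta\,f^{o}$, treating the cases $f^{o}=0$ and $f^{o}\neq0$ exactly as the paper does (including defining $\eta$ from the even identity first so that one constant serves both conclusions). The only, immaterial, difference is that you get the relation for $g^{o}$ by a linear-dependence argument on the pair $(g^{o},h^{o})$, whereas the paper reads it off directly by evaluating (\ref{eq4}) at a point $y_{0}$ with $f^{e}(y_{0})\neq0$.
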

\begin{proof}Since $f$ and $h$ are linearly independent modulo $\mathcal{V}$ then $f\not\in\mathcal{V}$.
According to Lemma 3.1(1) we have $f^{o}\in\mathcal{V}$, then
$f^{e}\not\in\mathcal{V}$ and consequently $f^{e}\neq0$.  So, there
exists $y_{0}\in G$ such that $f^{e}(y_{0})\neq0$. By putting
$y=y_{0}$ in (\ref{eq4}) we derive that there exist a constant
$\gamma\,\in\mathbb{C}$ and a function $b_{1}\in\mathcal{V}$ such
that
\begin{equation}\label{eq20}g^{o}=-\gamma\, h^{o}+b_{1}.\end{equation}
When we substitute this in (\ref{eq4}) we obtain $(-\gamma\,
h^{o}(x)+b_{1}(x))f^{e}(y)+h^{o}(x)h^{e}(y)=\varphi_{2}(x,y)$ ,
which implies $(h^{e}(y)-\gamma\,
f^{e}(y))h^{o}(x)=\varphi_{2}(x,y)-f^{e}(y)b_{1}(x)$. So, $x$ and
$y$ being arbitrary, we deduce that the function $x\mapsto
(h^{e}(y)-\gamma\, f^{e}(y))h^{o}(x)$ belongs to $\mathcal{V}$ for
all $y\in G$. As $h^{o}\not\in\mathcal{V}$ we get (\ref{eq17}).\\On
the other hand we get, from (\ref{eq3}), (\ref{eq17}) and
(\ref{eq20}), that
\begin{equation}\label{eq21}\varphi_{1}(x,y)=f^{e}(x)b_{1}(y)+g^{e}(x)f^{o}(y)\end{equation}for all
$x,y\in G$.
\par If $f^{o}\neq0$ then from (\ref{eq21})
there exist a constant $\eta\in\mathbb{C}$ and a function
$\varphi\in\mathcal{V}$ such that $g^{e}=\eta\, f^{e}+\varphi$ and
$\varphi(-x)=\varphi(x)$ for all $x\in G$. This is the result (2) of
Lemma 3.2. When we substitute this in the identity (\ref{eq21}) we
get, by a small computation, that
$\varphi_{1}(x,y)=f^{e}(x)[b_{1}(y)+\eta\,f^{o}(y)]+\varphi(x)f^{o}(y)$
for all $x,y\in G$. As the functions $\varphi$ and
$x\mapsto\varphi_{1}(x,y)$ belong to $\mathcal{V}$ for all $y\in G$,
we deduce that the function $x\mapsto
f^{e}(x)[b_{1}(y)+\eta\,f^{o}(y)]$ belongs to $\mathcal{V}$ for all
$y\in G$. So, taking into account that $f^{e}\not\in\mathcal{V}$ we
infer that $b_{1}=-\eta\, f^{o}$.
\par If $f^{o}=0$ then we get from (\ref{eq21}), and seeing
that $f^{e}\not\in\mathcal{V}$, that $b_{1}=0$.\\Hence, in both
cases we have $b_{1}=-\eta\, f^{o}$. By substituting this back into
(\ref{eq20}) we obtain (\ref{eq18}). This completes the proof.
\end{proof}
\begin{prop}Let $m:G\rightarrow\mathbb{C}$ be a nonzero multiplicative function
such that $m(-x)=m(x)$ for all $x\in G$. The solutions
$f,h:G\rightarrow\mathbb{C}$ of the functional equation
\begin{equation}\label{eq22}f(x+y)=f(x)m(y)+m(x)f(y)+h(x)h(y),\,\,x,y\in G\end{equation}
such that $f(-x)=f(x)$, $h(-x)=-h(x)$ for all $x\in G$ and $h\neq0$ are the pairs\\
$$f=\frac{1}{2}a^{2}m\,\,\,\textit{and}\,\,\,h=a\,m,$$ where
$a:G\rightarrow\mathbb{C}$ is a nonzero additive function.
\end{prop}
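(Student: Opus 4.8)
The plan is to strip away the multiplicative weight $m$ and reduce (\ref{eq22}) to a division-free equation that can be solved using only the prescribed parities of $f$ and $h$. Since $m$ is a nonzero multiplicative function on a group, it never vanishes and $m(e)=1$; because $m$ is moreover even, one even has $m(x)^{2}=m(x)m(-x)=m(e)=1$, but all I shall actually use is that $m$ is nowhere zero. Thus I may set $F:=f/m$ and $H:=h/m$. Dividing (\ref{eq22}) by $m(x)m(y)=m(x+y)$ turns it into the cleaner identity $F(x+y)=F(x)+F(y)+H(x)H(y)$, and the hypotheses transfer cleanly: $F$ is even, $H$ is odd, and $H\neq0$ (as $h\neq0$ and $m$ is nowhere zero). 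Setting $y=e$ and using oddness of $H$ gives $H(e)=0$ and $F(e)=0$.

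Next I would exploit associativity of addition. Expanding $F(x+y+z)$ both as $F((x+y)+z)$ and as $F(x+(y+z))$ with the reduced equation and cancelling the common terms yields the relation $H(x)H(y)+H(x+y)H(z)=H(y)H(z)+H(x)H(y+z)$. Fixing a point $z_{0}$ with $H(z_{0})\neq0$ and rearranging this relation expresses the difference $H(y+z)-H(y)$ in the form $H(z)\,\alpha(y)$ for a function $\alpha$ independent of $z$.

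The heart of the argument — and the step I expect to be the main obstacle — is upgrading this to genuine additivity of $H$. Here the oddness of $H$ is decisive: replacing $z$ by $-z$ in $H(y+z)-H(y)=H(z)\alpha(y)$ and adding the two resulting identities eliminates $\alpha$ and produces the Jensen-type relation $H(y+z)+H(y-z)=2H(y)$. Interchanging $y$ and $z$ there and again invoking oddness gives $H(y+z)-H(y-z)=2H(z)$; adding the last two identities shows $H(y+z)=H(y)+H(z)$, so $H$ is a nonzero additive function, which I denote by $a$.

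It then remains to identify $F$. From $F(x+y)-F(x)-F(y)=a(x)a(y)$, a direct expansion shows that $N:=F-\tfrac12 a^{2}$ satisfies $N(x+y)=N(x)+N(y)$, i.e.\ $N$ is additive; but $N$ is also even (both $F$ and $a^{2}$ are even, the latter since $a$ is odd), and an even additive complex-valued function vanishes identically because $\mathbb{C}$ has no $2$-torsion. Hence $F=\tfrac12 a^{2}$, and undoing the substitution gives $f=mF=\tfrac12 a^{2}m$ and $h=mH=am$. Finally I would record the routine converse: for any additive $a$ these formulas satisfy (\ref{eq22}) together with the required parities $f(-x)=f(x)$ and $h(-x)=-h(x)$, using $a(x+y)=a(x)+a(y)$ and $m(x+y)=m(x)m(y)$.
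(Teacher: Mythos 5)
Your proof is correct, and it takes a genuinely different route from the paper's. The paper first uses the parity hypotheses to eliminate $h$: replacing $y$ by $-y$ in the equation and adding gives $f(x+y)+f(x-y)=2f(x)m(y)+2m(x)f(y)$, so that $F=f/m$ satisfies the classical quadratic functional equation $F(x+y)+F(x-y)=2F(x)+2F(y)$; it then invokes the known structure theorem (Stetk{\ae}r, Theorem 13.13) to write $F(x)=Q(x,x)$ with $Q$ symmetric and biadditive, and substituting back yields $2Q(x,y)=H(x)H(y)$, from which additivity of $H=h/m$ is immediate (fix $y_{0}$ with $H(y_{0})\neq 0$) and $F=\frac{1}{2}a^{2}$ follows by putting $x=y$. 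You instead keep both unknowns in the division-free equation $F(x+y)=F(x)+F(y)+H(x)H(y)$ and prove additivity of $H$ from scratch: the associativity identity $\bigl(H(x+y)-H(y)\bigr)H(z)=H(x)\bigl(H(y+z)-H(y)\bigr)$, the specialization $x=z_{0}$, and the parity trick turning $H(y+z)-H(y)=H(z)\alpha(y)$ into the Jensen relation $H(y+z)+H(y-z)=2H(y)$ and thence into full additivity are all sound; so is your closing step, that $N=F-\frac{1}{2}a^{2}$ is additive and even, hence zero since $2N=0$ forces $N=0$ in $\mathbb{C}$. It is worth noting that evenness of $f$ enters your argument only at this last step, whereas in the paper it is spent at the initial symmetrization. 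The trade-off: the paper's proof is shorter modulo the cited quadratic-equation theorem, while yours is entirely self-contained and elementary, needing no biadditive machinery. Your incidental remark that $m(x)^{2}=m(x)m(-x)=m(e)=1$ is also correct --- an even multiplicative function on an abelian group is automatically $\{\pm 1\}$-valued, hence bounded --- though, as you say, it is not needed.
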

\begin{proof} It is easy to check that the indicated functions are solutions.
It is thus left to show that any solutions
$f,h:G\rightarrow\mathbb{C}$ can be written in the indicated forms.
Replacing $y$ by $-y$ in (\ref{eq22}) yields to the functional
equation $f(x-y)=f(x)m(y)+m(x)f(y)-h(x)h(y)$, because $f$ and $m$
are even functions, and $h$ is an add one. When to this we add
(\ref{eq22}) we get that $f(x+y)+f(x-y)=2f(x)m(y)+2m(x)f(y)$. Notice
that $m(x)\neq0$ for all $x\in G$, because $m$ is a nonzero
multiplicative function on the group $G$. Moreover since
$m(-x)=-m(x)$ for all $x\in G$ we have $m(x+y)=m(x-y)=m(x)m(y)$ for
all $x,y\in G$. So, by dividing both sides of (\ref{eq22}) by
$m(x+y)$ we get that $F:=f/m$ satisfies the classical quadratic
functional equation $F(x+y)+F(x-y)=2F(x)+2F(y)$. Hence from
\cite[Theorem 13.13]{Stetkaer} we derive that $F$ has the form
$F(x)=Q(x,x)$, $x\in G$, where $Q:G\times G\rightarrow\mathbb{C}$ is
a symmetric, bi-additive map. So that
\begin{equation}\label{eq23}f(x)=Q(x,x)m(x)\end{equation} for all $x\in
G$. Substituting this in (\ref{eq22}) and dividing both sides by
$m(x+y)=m(x)m(y)$, and using that $Q$ is a symmetric, bi-additive
map we derive that
\begin{equation}\label{eq24}2Q(x,y)=H(x)H(y)\end{equation} for all $x,y\in G$ with $H:=h/m$. Since, $H$ is a nonzero
function on $G$, because $h$ is, we get that there exists $y_{0}\in
G$ such that $H(y_{0})\neq0$. Hence, by putting $y=y_{0}$ in the
last identity and dividing both sides by $H(y_{0})$, and taking into
account that $Q$ is bi-additive, we deduce that $H=a$, where
$a:G\rightarrow\mathbb{C}$ is additive. So $h=a\,m$. Notice that $a$
is nonzero. On the other hand, by replacing $H$ by $a$ in
(\ref{eq24}) and putting $x=y$ we deduce that
$Q(x,x)=\frac{1}{2}a^{2}(x)$ for all $x\in G$. When we substitute
this in (\ref{eq23}) we get that $f=\frac{1}{2}a^{2}m$. This
completes the proof.
\end{proof}
\begin{prop} Let $f,g,h:G\rightarrow\mathbb{C}$
be functions. Suppose that $f$ and $h$ are linearly independent
modulo $\mathcal{B}(G)$. If the function
\begin{equation*}(x,y)\mapsto f(x+y)-f(x)g(y)-g(x)f(y)-h(x)h(y)\end{equation*}
is bounded then we have one of the
following possibilities:\\
(1)\[ \left\{
\begin{array}{r c l}
f&=&-\lambda^{2}f_{0}+\lambda^{2}b,\quad\quad\quad\quad\quad\quad\quad\quad\quad\quad\quad\quad\quad\quad\quad\quad\quad\quad\quad\quad\quad\quad\\
g&=&\frac{1+\rho^{2}}{2}f_{0}+\rho\, g_{0}+\frac{1-\rho^{2}}{2}b,\quad\quad\quad\quad\quad\quad\quad\quad\quad\quad\quad\quad\quad\quad\quad\\
h&=&\lambda\,\rho\, f_{0}+\lambda\, g_{0}-\lambda\,\rho\,
b,\quad\quad\quad\quad\quad\quad\quad\quad\quad\quad\quad\quad\quad\quad\quad\quad\quad\quad\quad
\end{array}
\right.
\]
where $b:G\rightarrow\mathbb{C}$ is a bounded function,
$\rho\in\mathbb{C},\,\lambda\in\mathbb{C}\setminus\{0\}$ are
constants and $f_{0},g_{0}:G\rightarrow\mathbb{C}$ are functions
satisfying the cosine functional equation
$$f_{0}(x+y)=f_{0}(x)f_{0}(y)-g_{0}(x)g_{0}(y),\,\,x,y\in G;$$
(2)\[ \left\{
\begin{array}{r c l}
f&=&\lambda^{2}M+a\,m+b,\quad\quad\quad\quad\quad\quad\quad\quad\quad\quad\quad\quad\quad\quad\\
g&=&\beta\lambda\,(1-\frac{1}{2}\beta\lambda\,)M+(1-\beta\lambda\,)m-\frac{1}{2}\beta^{2}a\,m-\frac{1}{2}\beta^{2}b,\quad\quad\quad\quad\quad\quad\quad\\
h&=&\lambda\,(1-\beta\lambda\,)M-\lambda\, m-\beta a\,m-\beta
b,\quad\quad\quad\quad\quad\quad\quad\quad\quad\quad\quad\quad
\end{array}
\right.
\]
where $m:G\rightarrow\mathbb{C}$ is a nonzero bounded multiplicative
function, $M:G\rightarrow\mathbb{C}$ is a non bounded multiplicative
function, $a:G\rightarrow\mathbb{C}$ is a nonzero additive function,
$b:G\rightarrow\mathbb{C}$ is a bounded function and
$\beta\in\mathbb{C},\,\lambda\in\mathbb{C}\setminus\{0\}$ are constants;\\
(3)\[ \left\{
\begin{array}{r c l}
f&=&\frac{1}{2}a^{2}\,m+\frac{1}{2}a_{1}\,m+b,\quad\quad\quad\quad\quad\quad\quad\quad\quad\quad\quad\quad\quad\quad\quad\\
g&=&-\frac{1}{4}\beta^{2}a^{2}\,m+\beta a\,m-\frac{1}{4}\beta^{2} a_{1}\,m+m-\frac{1}{2}\beta^{2}b,\quad\quad\quad\quad\quad\quad\quad\quad\\
h&=&-\frac{1}{2}\beta a^{2}\,m+a\,m-\frac{1}{2}\beta
a_{1}\,m-\beta\,b,\quad\quad\quad\quad\quad\quad\quad\quad\quad\quad\quad\quad\quad
\end{array}
\right.
\]
where $m:G\rightarrow\mathbb{C}$ is a nonzero bounded multiplicative
function, $a,a_{1}:G\rightarrow\mathbb{C}$ are additive functions
such that $a$ is nonzero,
$b:G\rightarrow\mathbb{C}$ is a bounded function and $\beta\in\mathbb{C}$ is a constant;\\
(4) $$f(x+y)=f(x)m(y)+m(x)f(y)+(a(x)m(x)+b(x))(a(y)m(y)+b(y))$$for
all $x,y\in G$,
 $$g=-\frac{1}{2}\beta^{2}f+(1+\beta a)m+\beta b$$ and
$$h=-\beta f+a\,m+b,$$
where $m:G\rightarrow\mathbb{C}$ is a nonzero bounded multiplicative
function, $a:G\rightarrow\mathbb{C}$ is a nonzero additive function,
$b:G\rightarrow\mathbb{C}$ is a bounded function and
$\beta\in\mathbb{C}$ is a constant;\\
(5) $f(x+y)=f(x)g(y)+g(x)f(y)+h(x)h(y)$ for all $x,y\in G$.
\end{prop}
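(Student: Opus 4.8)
The plan is to treat the bounded remainder as data and to prove that, modulo $\mathcal{B}(G)$, the triple $(f,g,h)$ is forced into one of the five exact shapes listed, with the bounded function $b$ absorbing all the remainders. Write
\begin{equation*}
b_0(x,y):=f(x+y)-f(x)g(y)-g(x)f(y)-h(x)h(y),
\end{equation*}
so that $b_0$ is bounded on $G\times G$. Since $f$ and $h$ are linearly independent modulo $\mathcal{B}(G)$ we have $f\notin\mathcal{B}(G)$; fixing $y$ in the displayed identity shows that every translate $x\mapsto f(x+y)$ lies in the linear span of $f,g,h$ modulo $\mathcal{B}(G)$. Hence the translates of $f$ generate a space of dimension at most $3$ in the quotient of the complex functions on $G$ by $\mathcal{B}(G)$, and this finite-dimensionality is the structural input that will ultimately confine $f,g,h$ to combinations of additive, multiplicative and cosine functions. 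I will only prove necessity, the verification that the listed functions are solutions being a direct substitution.

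First I would exploit associativity. Computing $f(x+y+z)$ by the two groupings $(x+y)+z$ and $x+(y+z)$ and substituting the approximate equation three times, one obtains, modulo bounded functions,
\begin{equation*}
f(x)\bigl[g(y)g(z)-g(y+z)\bigr]+g(x)\bigl[f(y)g(z)-f(y+z)\bigr]+h(x)\bigl[h(y)g(z)-h(y+z)\bigr]+g(x+y)f(z)+h(x+y)h(z)=O(1).
\end{equation*}
After symmetrising in the roles of the three variables to control the terms $g(x+y)$ and $h(x+y)$, and using the linear independence of the surviving functions modulo $\mathcal{B}(G)$, I would match ``coefficients'' of $f(x),g(x),h(x)$ to extract second-order relations: a suitable combination of $g$ and $h$ together with $f$ satisfies, modulo $\mathcal{B}(G)$, either a cosine equation or a multiplicative equation. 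Because $G$ is abelian, hence amenable, I would then apply an invariant mean in the bounded variable to promote these approximate relations to genuine identities, producing either an exact cosine pair $(f_0,g_0)$ or an exact multiplicative function (bounded or not), and in the presence of a polynomial factor an additive function $a$.

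The classification is then a case split on the object produced. If a genuine cosine pair with $f_0(x+y)=f_0(x)f_0(y)-g_0(x)g_0(y)$ emerges, expressing $f,g,h$ in the basis $\{f_0,g_0\}$ modulo $\mathcal{B}(G)$ and re-substituting yields the coefficients $-\lambda^2$, $\tfrac{1+\rho^2}{2}$, $\rho$, $\lambda\rho$, $\lambda$ of possibility (1). If instead a multiplicative function governs the solution, I would distinguish whether it is unbounded (a non-bounded $M$ alongside a bounded $m$), giving possibility (2), or bounded only, giving possibility (3); in these sub-cases the even/odd decomposition reduces the equation for the even part to exactly the form solved in Proposition 3.3, which supplies the $\tfrac12 a^2 m$ and $a\,m$ shapes, while the relations between $g^{e},g^{o},h$ and $f$ follow as in Lemma 3.2. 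The reducible configuration in which $h$ equals, modulo $\mathcal{B}(G)$, a scalar multiple of $f$ plus an additive-times-multiplicative term $a\,m+b$ leads to possibility (4), where $f$ inherits its own equation of the same type; and finally, if the remainder $b_0$ can be taken to be identically $0$, the equation holds exactly, which is possibility (5).

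The main obstacle is the exhaustive and degenerate case analysis rather than any single computation. Coefficient-matching in the associativity relation is legitimate only while the relevant subfamily of $\{f,g,h,f_0,g_0,m,M,\dots\}$ remains linearly independent modulo $\mathcal{B}(G)$; the genuine work is to organise the degenerations (when $f^{o}=0$, when an extracted odd part lies in $\mathcal{B}(G)$, when two extracted exponentials coincide) so that every triple lands in \emph{exactly one} of (1)--(5), to fix the constants $\lambda,\rho,\beta,\gamma,\eta$ consistently, and to check that the several separate bounded remainders collapse into the single bounded function $b$ appearing in each possibility. I expect the sharpest point to be separating possibility (2) from possibility (3), which turns entirely on whether the multiplicative function extracted from the associativity relation is bounded.
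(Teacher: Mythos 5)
Your endpoints and your toolbox (amenability of $G$, Sz\'{e}kelyhidi-type stability theorems, a case split ending in (1)--(5)) are the right ones, but the engine of your argument is missing, and it is precisely the step the paper's proof supplies. The paper proves this proposition by the method of \cite[Lemma 3.4]{Ajebbar and Elqorachi3}, whose key steps are in fact reused verbatim later in the paper (see (\ref{eq33})--(\ref{eq36}) in Subcase B.1 of Theorem 4.1): since $f$ and $h$ are linearly independent modulo $\mathcal{B}(G)$, one first shows that there exist constants $\alpha,\beta\in\mathbb{C}$ and $b\in\mathcal{B}(G)$ with $g=\alpha f+\beta h+b$; substituting this back and completing the square gives, up to bounded error, $f(x+y)=(2\alpha-\beta^{2})f(x)f(y)+f(x)b(y)+b(x)f(y)+H(x)H(y)$ with $H:=\beta f+h$, and the classification then turns on the discriminant: $2\alpha\neq\beta^{2}$ leads, after renormalization, to an approximate cosine equation and possibility (1), while $2\alpha=\beta^{2}$ leads to $b$ being essentially multiplicative and to $H(x+y)=H(x)m(y)+m(x)H(y)+\eta H(x)H(y)$, whose subcases ($\eta\neq0$ with unbounded exponential, $\eta=0$ sine-type, degenerate) produce (2)--(4), with (5) the exact case. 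The crucial feature of this route is that the first move \emph{eliminates} $g$, so that every function whose translates appear afterwards is built from $f$ and $H$ alone.

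Your proposal never achieves that elimination, and this is a genuine gap, not a presentational one. First, your displayed associativity relation is not $O(1)$ on $G^{3}$: substituting the approximate equation three times creates cross terms such as $g(x)b_{0}(y,z)$ and $h(x)b_{0}(y,z)$, which are unbounded multiples of $g$ and $h$; boundedness holds only after fixing all but one variable. Second, once you work one variable at a time, the terms $g(x+y)f(z)+h(x+y)h(z)$ can only be matched against coefficients if you already know that the translates of $g$ and $h$ lie in a fixed finite-dimensional space modulo $\mathcal{B}(G)$ --- the hypothesis controls only the translates of $f$ (your ``dimension at most $3$'' remark), and ``symmetrising'' does not manufacture this control; the relation $g=\alpha f+\beta h+b$ is exactly what makes it true, and you neither state nor prove it. Third, an invariant mean does not in general ``promote approximate relations to genuine identities''; what is actually available are Sz\'{e}kelyhidi's stability theorems for the sine and cosine equations, each with a linear-independence hypothesis that must be verified branch by branch (e.g.\ that $f_{0}$ and $g_{0}$ remain independent modulo $\mathcal{B}(G)$ after the renormalization). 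Finally, your announced conclusion --- that $f,g,h$ are confined to combinations of additive, multiplicative and cosine functions --- would wrongly exclude possibility (4), in which $f$ is \emph{not} given in closed form but only as an exact solution of an inhomogeneous equation involving $a\,m+b$; that branch records a degeneration where the approximate-to-exact passage closes the equation without solving it, and any correct proof must exhibit how it arises rather than fold it into the solved cases.
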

\begin{proof} We proceed as in the proof of \cite[Lemma 3.4]{Ajebbar and
Elqorachi3}.
\end{proof}
\section{Stability of equation (\ref{eq1})}
In this section we prove the main result of this paper.
\begin{thm} Let $f,g,h:G\rightarrow\mathbb{C}$ be functions. The
function
\begin{equation*}(x,y)\mapsto f(x-y)-f(x)g(y)-g(x)f(y)-h(x)h(y)\end{equation*}
is bounded if and only if one of the following assertions holds:\\
(1) $f=0$, $g$ is arbitrary and $h\in\mathcal{B}(G)$;\\
(2) $f,g,h\in\mathcal{B}(G)$;\\
(3)\[ \left\{
\begin{array}{r c l}
f&=&\alpha\,m-\alpha\,b,\quad\quad\quad\quad\quad\quad\quad\quad\quad\quad\quad\quad\quad\quad\quad\quad\quad\quad\quad\quad\quad\quad\quad\quad\\
g&=&\frac{1-\alpha\lambda^{2}}{2}m+\frac{1+\alpha\lambda^{2}}{2}b-\lambda\,\varphi,\quad\quad\quad\quad\quad\quad\quad\quad\quad\quad\quad\quad\quad\quad\quad\quad\\
h&=&\alpha\lambda\,m-\alpha\lambda\,b+\varphi,\quad\quad\quad\quad\quad\quad\quad\quad\quad\quad\quad\quad\quad\quad\quad\quad\quad\quad\quad\quad\quad
\end{array}
\right.
\]
where $m:G\rightarrow\mathbb{C}$ is a multiplicative function such
that $m(-x)=m(x)$ for all $x\in G$ or $m\in\mathcal{B}(G)$,
$b,\varphi:G\rightarrow\mathbb{C}$ are bounded functions and $\alpha\in\mathbb{C}\setminus\{0\},\lambda\in\mathbb{C}$ are constants;\\
(4)\[ \left\{
\begin{array}{r c l}
f&=&f_{0},\quad\quad\quad\quad\quad\quad\quad\quad\quad\quad\quad\quad\quad\quad\quad\quad\quad\quad\quad\quad\quad\quad\quad\quad\quad\quad\\
g&=&-\frac{\lambda^{2}}{2}f_{0}+g_{0}-\lambda\,b,\quad\quad\quad\quad\quad\quad\quad\quad\quad\quad\quad\quad\quad\quad\quad\quad\quad\quad\quad\\
h&=&\lambda\,
f_{0}+b,\quad\quad\quad\quad\quad\quad\quad\quad\quad\quad\quad\quad\quad\quad\quad\quad\quad\quad\quad\quad\quad\quad\quad
\end{array}
\right.
\]
where $b:G\rightarrow\mathbb{C}$ is a bounded function,
$\lambda\in\mathbb{C}$ is a constant and
$f_{0},g_{0}:G\rightarrow\mathbb{C}$ are functions satisfying the
functional equation
$$f_{0}(x-y)=f_{0}(x)g_{0}(y)+g_{0}(x)f_{0}(y),\,\,x,y\in G;$$
(5) \[ \left\{
\begin{array}{r c l}
f&=&-\lambda^{2}f_{0}+\lambda^{2}b,\quad\quad\quad\quad\quad\quad\quad\quad\quad\quad\quad\quad\quad\quad\quad\quad\quad\quad\quad\quad\quad\quad\\
g&=&\frac{1+\rho^{2}}{2}f_{0}+\rho\, g_{0}+\frac{1-\rho^{2}}{2}b,\quad\quad\quad\quad\quad\quad\quad\quad\quad\quad\quad\quad\quad\quad\quad\\
h&=&\lambda\,\rho\, f_{0}+\lambda\, g_{0}-\lambda\,\rho\,
b,\quad\quad\quad\quad\quad\quad\quad\quad\quad\quad\quad\quad\quad\quad\quad\quad\quad\quad\quad
\end{array}
\right.
\]
where $b:G\rightarrow\mathbb{C}$ is a bounded function,
$\rho\in\mathbb{C},\,\lambda\in\mathbb{C}\setminus\{0\}$ are
constants and $f_{0},g_{0}:G\rightarrow\mathbb{C}$ are functions
satisfying the cosine functional equation
$$f_{0}(x+y)=f_{0}(x)f_{0}(y)-g_{0}(x)g_{0}(y),\,\,x,y\in G,$$ such that
$f_{0}(-x)=f_{0}(x)$ and $g_{0}(-x)=g_{0}(x)$ for all $x\in G$;\\
(6)
\[\left\{
\begin{array}{r c l}
f&=&\lambda^{2}f_{0}-\lambda^{2}b,\quad\quad\quad\quad\quad\quad\quad\quad\quad\quad\quad\quad\quad\quad\quad\quad\quad\quad\quad\quad\quad\quad\\
g&=&\frac{1}{2}f_{o}+\frac{1}{2}b,\quad\quad\quad\quad\quad\quad\quad\quad\quad\quad\quad\quad\quad\quad\quad\\
h&=&\lambda\,
g_{0},\quad\quad\quad\quad\quad\quad\quad\quad\quad\quad\quad\quad\quad\quad\quad\quad\quad\quad\quad
\end{array}
\right.
\]
where $b:G\rightarrow\mathbb{C}$ is a bounded function,
$\lambda\in\mathbb{C}\setminus\{0\}$ is a constant and
$f_{0},g_{0}:G\rightarrow\mathbb{C}$ are functions satisfying the
cosine functional equation
$$f_{0}(x+y)=f_{0}(x)f_{0}(y)-g_{0}(x)g_{0}(y),\,\,x,y\in G,$$ such that
$f_{0}(-x)=f_{0}(x)$ and $g_{0}(-x)=-g_{0}(x)$ for all $x\in G$;\\
(7)\[ \left\{
\begin{array}{r c l}
f&=&\frac{1}{2}a^{2}\,m+b,\quad\quad\quad\quad\quad\quad\quad\quad\quad\quad\quad\quad\quad\quad\quad\quad\quad\quad\\
g&=&m,\quad\quad\quad\quad\quad\quad\quad\quad\quad\quad\quad\quad\quad\quad\quad\quad\quad\quad\quad\quad\quad\quad\quad\quad\\
h&=&-ia\,m,\quad\quad\quad\quad\quad\quad\quad\quad\quad\quad\quad\quad\quad\quad\quad\quad\quad\quad\quad\quad\quad\quad\quad\quad
\end{array}
\right.
\]
where $m:G\rightarrow\mathbb{C}$ is a nonzero bounded multiplicative
function, $a:G\rightarrow\mathbb{C}$ is a nonzero additive function
and $b:G\rightarrow\mathbb{C}$ is a bounded function such that $m(-x)=m(x)$ and $b(-x)=-b(x)$ for all $x\in G$;\\
(8) $f(x-y)=f(x)g(y)+g(x)f(y)+h(x)h(y)$ for all $x,y\in G$;\\
(9)\[\left\{
\begin{array}{r c l}
f&=&F_{0}+\varphi,\quad\quad\quad\quad\quad\quad\quad\quad\quad\quad\quad\quad\quad\quad\quad\quad\quad\quad\quad\quad\quad\quad\\
g&=&-\frac{1}{2}\delta^{2}F_{0}+G_{0}+\delta\,H_{0}-\rho\,\varphi,\quad\quad\quad\quad\quad\quad\quad\quad\quad\quad\quad\quad\quad\\
h&=&-\delta\,F_{0}+H_{0}-\delta\,\varphi,\quad\quad\quad\quad\quad\quad\quad\quad\quad\quad\quad\quad\quad\quad\quad\quad\quad\quad
\end{array}
\right.
\]
where $\rho\in\mathbb{C},\,\delta\in\mathbb{C}\setminus\{0\}$ are
constants and the functions
$F_{0},G_{0},H_{0}:G\rightarrow\mathbb{C}$ are of the forms (6)-(7)
under the same constraints, with $F_{0}(-x)=F_{0}(x)$,
$G_{0}(-x)=G_{0}(x)$, $H_{0}(-x)=-H_{0}(x)$,
$\varphi(-x)=-\varphi(x)$ for all $x\in G$, such that
\par (i) $b(-x)=b(x)$ for all $x\in G$ and
$\rho=\frac{1+\lambda^{}\delta^{2}}{2\lambda^{2}}$ if $F_{0}$,
$G_{0}$ and $H_{0}$ are of the form (6),
\par (ii) $b=0$ and $\rho=\frac{1}{2}\delta^{2}$ if $F_{0}$,
$G_{0}$ and $H_{0}$ are of the form (7).
\end{thm}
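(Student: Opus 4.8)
The plan is to prove the two implications separately. Sufficiency is a direct verification: for each of the nine forms I would substitute into $f(x-y)-f(x)g(y)-g(x)f(y)-h(x)h(y)$ and check that the products of two unbounded factors cancel, using the cosine equation satisfied by $f_{0},g_{0}$, the multiplicativity of $m$ and $M$, and the additivity of $a,a_{1}$; in case (8) the expression is identically zero, and in the remaining cases only bounded terms survive.

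The substantive direction is necessity. Write $\psi(x,y):=f(x-y)-f(x)g(y)-g(x)f(y)-h(x)h(y)$ and assume it is bounded. Since the right-hand side of (\ref{eq1}) is symmetric in $x$ and $y$, one has $f(x-y)-f(y-x)=\psi(x,y)-\psi(y,x)$, which is bounded; thus the hypotheses of Lemma 3.1 hold with $\mathcal{V}=\mathcal{B}(G)$, yielding $f^{o}\in\mathcal{B}(G)$ together with the boundedness of the sections of $\varphi_{1},\varphi_{2},\varphi_{3}$ in (\ref{eq3})--(\ref{eq5}). I would then split the argument according to whether $f$ and $h$ are linearly dependent or independent modulo $\mathcal{B}(G)$. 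In the dependent case, including the subcases $f=0$ and $f\in\mathcal{B}(G)$, one writes $h=\lambda f+(\text{bounded})$, inserts this into $\psi$, and reads off the degenerate normal forms (1)--(4) after a short computation; here the sine-type solutions $f_{0},g_{0}$ of form (4) appear precisely because $f$ itself may then satisfy the homogeneous equation up to a bounded error.

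The independent case is where Lemma 3.2 and Proposition 3.4 enter. Applying Lemma 3.2 (legitimate once one checks $h^{o}\notin\mathcal{B}(G)$, the complementary subcase being treated directly) gives $h^{e}=\gamma f^{e}$, $g^{o}=-\gamma h^{o}-\eta f^{o}$ and, when $f^{o}\neq0$, $g^{e}=\eta f^{e}+\varphi$ with $\varphi$ bounded and even. Replacing $y$ by $-y$ in the even-part identity (\ref{eq9}) converts $x-y$ into $x+y$, and after using these relations to eliminate the odd components I would show that appropriate even representatives of $f,g,h$ satisfy, modulo $\mathcal{B}(G)$, the stability hypothesis of the ``plus'' equation. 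Proposition 3.4 then produces the multiplicative functions $m,M$, the additive functions $a,a_{1}$, and the cosine pair $f_{0},g_{0}$, together with the scalars $\lambda,\rho,\beta$; Proposition 3.3 pins down the purely even cosine solution carrying an odd additive factor that underlies form (7). Reassembling $f,g,h$ by restoring the odd bounded perturbations distributes these data among forms (5)--(8) according to the nature of the building blocks and the parity of $g_{0}$, while the genuinely mixed form (9) arises by superposing an even solution of type (6) or (7) with the odd perturbation $\varphi$.

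The main obstacle is this final reassembly. The delicate points are: verifying that the even parts really do satisfy the symmetric ``plus'' stability condition after the odd components have been absorbed, so that Proposition 3.4 is applicable without loss; keeping track of how $\gamma,\eta,\lambda,\rho,\delta,\beta$ and the bounded even and odd perturbations combine when one returns from the even classification to the original ``minus'' equation; and showing that the parity constraints recorded in (9)---namely $F_{0}(-x)=F_{0}(x)$, $G_{0}(-x)=G_{0}(x)$, $H_{0}(-x)=-H_{0}(x)$, $\varphi(-x)=-\varphi(x)$ and the prescribed values of $\rho$---are forced by the computation rather than imposed. Confirming that every reconstructed triple lands in exactly one of the listed forms, with no surviving cross-terms, is where essentially all of the bookkeeping lies.
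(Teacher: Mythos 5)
Your plan reproduces the paper's architecture almost exactly: the observation that the right-hand side of (\ref{eq1}) is symmetric, so that $f(x-y)-f(y-x)=\psi(x,y)-\psi(y,x)$ is bounded and Lemma 3.1 applies with $\mathcal{V}=\mathcal{B}(G)$; the split according to linear dependence of $f$ and $h$ modulo $\mathcal{B}(G)$; in the independent case the dichotomy $h^{o}\in\mathcal{B}(G)$ versus $h^{o}\notin\mathcal{B}(G)$, with Lemma 3.2 supplying $h^{e}=\gamma f^{e}$ and $g^{o}=-\gamma h^{o}-\eta f^{o}$; the passage to even parts satisfying the stability hypothesis of the ``plus'' equation so that Proposition 3.4 applies (with the parity contradictions killing its possibilities (2)--(4), and Proposition 3.3 pinning down form (7) via $k=ih$); and for $\gamma\neq0$ the superposition $F_{0}:=f^{e}$, $G_{0}:=g^{e}+\frac{1}{2}\gamma^{2}f^{e}$, $H_{0}:=h^{o}-\gamma f^{o}$, $\varphi:=f^{o}$, which reduces to the $\gamma=0$ subcase and forces the constraints (i)--(ii) of form (9) exactly as you anticipate. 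Up to bookkeeping, this is the paper's proof of the independent case.

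The genuine gap is in your treatment of the dependent case, which is not ``a short computation.'' Two things are missing. First, dependence modulo $\mathcal{B}(G)$ includes the configuration $f\in\mathcal{B}(G)$, $h\notin\mathcal{B}(G)$, in which $h=\lambda f+(\text{bounded})$ is simply unavailable; the paper must rule this out by a separate contradiction argument (writing $h=\alpha g+k$ with $k$ bounded and deriving $h\in\mathcal{B}(G)$) before it may take $\lambda\neq0$ in (\ref{eq26}). Second, and more seriously: after substituting $h=\lambda f+\varphi$ and absorbing the $h(x)h(y)$ term via $\phi:=g+\frac{\lambda^{2}}{2}f+\lambda\varphi$ as in (\ref{eq27})--(\ref{eq28}), what you obtain is only that $(x,y)\mapsto f(x-y)-f(x)\phi(y)-\phi(x)f(y)$ is bounded. ``Reading off'' forms (1)--(4) from this is precisely the Hyers--Ulam stability classification of the sine equation $f(x-y)=f(x)g(y)+g(x)f(y)$ on amenable groups, which the paper imports as \cite[Theorem 3.3]{Ajebbar and Elqorachi2} (using that an abelian group is amenable). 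That Sz\'{e}kelyhidi-type theorem, resting on invariant means, is what produces the nontrivial family (3) and the exact sine solutions underlying form (4); it is not among the tools your plan names (Lemmas 3.1--3.2 and Propositions 3.3--3.4 cover only the independent case). Your remark that $f$ ``may then satisfy the homogeneous equation up to a bounded error'' states the hypothesis of that theorem, not its conclusion: passing from the approximate identity to the exact pair $(f_{0},g_{0})$ of form (4) is exactly the step that cannot be done by elementary manipulation, so the dependent branch of your argument does not close as written.
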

\begin{proof} To study the stability of the functional equation (\ref{eq1})
we will discuss two cases according to whether $f$ and $h$ are
linearly independent modulo $\mathcal{B}(G)$ or not.\\
\underline{Case A}: $f$ and $h$ are linearly dependent modulo
$\mathcal{B}(G)$. We split the discussion into the cases
$h\in\mathcal{B}(G)$ and $h\not\in\mathcal{B}(G)$.\\
\underline{Case A.1}: $h\in\mathcal{B}(G)$. Then the function
\begin{equation*}(x,y)\mapsto f(x-y)-f(x)g(y)-g(x)f(y)\end{equation*}
is bounded. Since the group $G$ is abelian it is an amenable group.
So, according to \cite[Theorem 3.3]{Ajebbar and Elqorachi2}, we have
of the following assertions:\\
(1) $f=0$, $g$ is arbitrary and $h\in\mathcal{B}(G)$. The result
occurs in (1) of Theorem 4.1.\\
(2) $f,g,h\in\mathcal{B}(G)$. The result
occurs in (2) of Theorem 4.1.\\
(3) $f=a\,m+b$ and $g=m$, where $a:G\rightarrow\mathbb{C}$ is an
additive function, $m:G\rightarrow\mathbb{C}$ is a bounded
multiplicative function and $b:G\rightarrow\mathbb{C}$ is a bounded
function such that $m(-x)=m(x)$ and $a(-x)=a(x)$ for all $x\in G$.
Then $2a(x)=a(x)+a(-x)=a(x-x)=a(e)=0$ for all $x\in G$. Hence $a(x)=0$ for all $x\in G$.
We deduce that $f,g,h\in\mathcal{B}(G)$. This is the result (2) of Theorem 4.1.\\
(4) $f=\alpha\,m-\alpha\,b$, $g=\frac{1}{2}\,m+\frac{1}{2}\,b$,
where $\alpha\in\mathbb{C}\setminus\{0\}$ is a constant,
$b:G\rightarrow\mathbb{C}$ is a bounded function and
$m:G\rightarrow\mathbb{C}$ is a multiplicative function such that
$m(-x)=m(x)$ for all $x\in G$ or $m\in\mathcal{B}(G)$. This is the result (3) of Theorem 4.1 for $\lambda\,=0$.\\
(5) $f(x-y))=f(x)g(y)+g(x)f(y)$ for all $x,y\in G$. So, taking into
account that $h\in\mathcal{B}(G)$, we obtain the result (4) of
Theorem 4.1 for $\lambda\,=0$.\\
\underline{Case A.2}: $h\not\in\mathcal{B}(G)$. Then
$f\not\in\mathcal{B}(G)$. Indeed if $f\in\mathcal{B}(G)$ then the
functions $x\mapsto f(x)g(y)$ and $x\mapsto f(x-y)$ belong to
$\mathcal{B}(G)$ for all $y\in G$. As the function
$x\mapsto\psi(x,y)$ belongs to $\mathcal{B}(G)$ for all $y\in G$ we
get that the function $x\mapsto g(x)f(y)+h(x)h(y)$ belongs to
$\mathcal{B}(G)$ for all $y\in G$. So, taking into account that
$h\not\in\mathcal{B}(G)$, we get that there exist a constant
$\alpha\in\mathbb{C}\setminus\{0\}$ and a function
$k\in\mathcal{B}(G)$ such that
\begin{equation}\label{eq25}h=\alpha g+k.\end{equation}
Substituting (\ref{eq25}) in (\ref{eq6}) we get, by an elementary
computation, that
$$\psi(x,y)=f(x-y)-k(x)k(y)-g(x)[f(y)+\alpha\,h(y)]-g(y)[f(x)+\alpha k(x)]$$
for all $x,y\in G$. It follows that the function $x\mapsto
g(x)[f(y)+\alpha h(y)]$ belongs to $\mathcal{B}(G)$ for all $y\in
G$, so that $h=-\frac{1}{\alpha} f$ or $g\in\mathcal{B}(G)$. Hence,
taking (\ref{eq25}) into account, we get that $h\in\mathcal{B}(G)$,
which contradicts the assumption on $h$. We deduce that
$f\not\in\mathcal{B}(G)$. Since $f$ and $h$ are linearly dependent
modulo $\mathcal{B}(G)$ we deduce that there exist a constant
$\lambda\in\mathbb{C}\setminus\{0\}$ and a function
$\varphi\in\mathcal{V}$ such that
\begin{equation}\label{eq26}h=\lambda\,f+\varphi.\end{equation}
When we substitute (\ref{eq26}) in (\ref{eq6}) we obtain by an
elementary computation
\begin{equation}\label{eq27}\psi(x,y)+\varphi(x)\varphi(y)=f(x-y)-f(x)\phi(y)-\phi(x)f(y)\end{equation}
for all $x,y\in G$, where
\begin{equation}\label{eq28}\phi:=g+\frac{\lambda^{2}}{2}f+\lambda\, \varphi.\end{equation}
Since the functions $\psi$ and $\varphi$ are bounded we derive from
(\ref{eq27}) that the function $(x,y)\mapsto
f(x-y)-f(x)\phi(y)-\phi(x)f(y)$ is also bounded. Hence, according to
\cite[Theorem 3.3]{Ajebbar and Elqorachi2} and taking (\ref{eq26})
into account and that $h\not\in\mathcal{B}(G)$,
we have one of the following possibilities:\\
(1) $f=a\,m+b$ and $\phi=m$, where $a:G\rightarrow\mathbb{C}$ is an
additive function, $m:G\rightarrow\mathbb{C}$ is a bounded
multiplicative function and $b:G\rightarrow\mathbb{C}$ is a bounded
function such that $m(-x)=m(x)$ and $a(-x)=a(x)$ for all $x\in G$.
As in Case A.1(3) we prove that the result (2) of Theorem 4.1 holds.\\
(2) $f=\alpha\,m-\alpha\,b$, $\phi=\frac{1}{2}m+\frac{1}{2}b$, where
$\alpha\in\mathbb{C}\setminus\{0\}$ is a constant,
$b:G\rightarrow\mathbb{C}$ is a bounded function and
$m:G\rightarrow\mathbb{C}$ is a multiplicative function such that
$m(-x)=m(x)$ for all $x\in G$ or $m\in\mathcal{B}(G)$. So, by using
(\ref{eq28}) and (\ref{eq26}) we get that
$g=\frac{1}{2}m+\frac{1}{2}b-\frac{\lambda^{2}}{2}(\alpha m-\alpha
b)-\lambda\, \varphi
=\frac{1-\alpha\lambda^{2}}{2}m+\frac{1+\alpha\lambda^{2}}{2}b-\lambda\,\varphi$
and $h=\alpha\lambda\, m-\alpha\lambda\, b+\varphi$. The result
occurs in (3) of Theorem 4.1.\\
(3) $f(x-y)=f(x)\phi(y)+\phi(x)f(y)$ for all $x,y\in G$. By putting
$f_{0}:=f$ and $g_{0}:=\phi$ we get the result (4) of Theorem 4.1.\\
\underline{Case B}: $f$ and $h$ are linearly independent modulo
$\mathcal{B}(G)$. Then $f\not\in\mathcal{B}(G)$. Moreover, according
to Lemma 3.1(1), we have $f^{o}\in\mathcal{B}(G)$ and then
$f^{e}\neq0$. It follows from (\ref{eq4}), with $\varphi_{2}$
satisfying the same constraint in Lemma 3.1, that if
$h^{o}\in\mathcal{B}(G)$ then $g^{o}\in\mathcal{\mathcal{B}}(G)$. So
we will discuss the following subcases: $h^{o}\in\mathcal{B}(G)$ and
$h^{o}\not\in\mathcal{B}(G)$.\\
\underline{Subcase B.1}: $h^{o}\in\mathcal{B}(G)$. Let $x,y\in G$ be
arbitrary. From (\ref{eq6}) we get, by using (\ref{eq3}) an
(\ref{eq4}), that
\begin{equation*}\begin{split}&f^{e}(x-y)=[f^{e}(x)+f^{o}(x)][g^{e}(y)+g^{o}(y)]+[g^{e}(x)+g^{o}(x)][f^{e}(y)+f^{o}(y)]\\
&=+[h^{e}(x)+h^{o}(x)][h^{e}(y)+h^{o}(y)]-f^{o}(x-y)+\psi(x,y)\\
&=f^{e}(x)g^{e}(y)+g^{e}(x)f^{e}(y)+h^{e}(x)h^{e}(y)+[f^{e}(x)g^{o}(y)+g^{e}(x)f^{o}(y)+h^{e}(x)h^{o}(y)]\\
&+[f^{o}(x)g^{e}(y)+g^{o}(x)f^{e}(y)+h^{o}(x)h^{e}(y)]+f^{o}(x)g^{o}(y)+g^{o}(x)f^{o}(y)+h^{o}(x)h^{o}(y)\\
&-f^{o}(x-y)+\psi(x,y)\\
&=f^{e}(x)g^{e}(y)+g^{e}(x)f^{e}(y)+h^{e}(x)h^{e}(y)+f^{o}(x)g^{o}(y)+g^{o}(x)f^{o}(y)+h^{o}(x)h^{o}(y)\\
&-f^{o}(x-y)+\varphi_{1}(x,y)+\varphi_{1}(y,x)+\psi(x,y).\end{split}\end{equation*}
So, $x$ and $y$ being arbitrary, by using the fact that the
functions $f^{o}$, $g^{o}$, $h^{o}$ and $\psi$ are bounded, and
taking (\ref{eq13}) into account, we deduce from the identity above
that the function $(x,y)\mapsto
f^{e}(x-y)-f^{e}(x)g^{e}(y)-g^{e}(x)f^{e}(y)-h^{e}(x)h^{e}(y)$ is
bounded, so is the function $(x,y)\mapsto
f^{e}(x+y)-f^{e}(x)g^{e}(y)-g^{e}(x)f^{e}(y)-h^{e}(x)h^{e}(y)$.
Moreover since the functions $f$ and $h$ are linearly independent
modulo $\mathcal{B}(G)$ and $f^{o},h^{o}\in\mathcal{B}(G)$ we get
that $f^{e}$ and $h^{e}$ are linearly independent. Hence, according
to Proposition 3.4 we have one of the following
possibilities:\\
(1)\[ \left\{
\begin{array}{r c l}
f^{e}&=&-\lambda^{2}f_{0}+\lambda^{2}b,\quad\quad\quad\quad\quad\quad\quad\quad\quad\quad\quad\quad\quad\quad\quad\quad\quad\quad\quad\quad\quad\quad\\
g^{e}&=&\frac{1+\rho^{2}}{2}f_{0}+\rho\, g_{0}+\frac{1-\rho^{2}}{2}b,\quad\quad\quad\quad\quad\quad\quad\quad\quad\quad\quad\quad\quad\quad\quad\\
h^{e}&=&\lambda\,\rho\, f_{0}+\lambda\, g_{0}-\lambda\,\rho\,
b,\quad\quad\quad\quad\quad\quad\quad\quad\quad\quad\quad\quad\quad\quad\quad\quad\quad\quad\quad
\end{array}
\right.
\]
where $b:G\rightarrow\mathbb{C}$ is a bounded function,
$\rho\in\mathbb{C},\,\lambda\in\mathbb{C}\setminus\{0\}$ are
constants and $f_{0},g_{0}:G\rightarrow\mathbb{C}$ are functions
satisfying the cosine functional equation
$$f_{0}(x+y)=f_{0}(x)f_{0}(y)-g_{0}(x)g_{0}(y),\,\,x,y\in G.$$
\par Notice that $f_{0}\not\in\mathcal{B}(G)$ because
$f^{e}=-\lambda^{2}\,f_{0}+\lambda^{2}\,b$,
$f^{e}\not\in\mathcal{B}(G)$ and $b\in\mathcal{B}(G)$. Since $f^{e}$
and $h^{e}$ are linearly independent modulo $\mathcal{B}(G)$ so are
$f_{0}$ and $g_{0}$. Indeed, if not then there exist a constant
$\alpha\in\mathbb{C}$ and a function $\varphi\in\mathcal{B}(G)$ such
that $g_{0}=\alpha f_{0}+\varphi$. Hence $h^{e}=\lambda\,\rho\,
f_{0}+\lambda\,(\alpha f_{0}+\varphi)-\lambda\,\rho\,
b=\lambda\,(\rho+\alpha)f_{0}+b_{1}$, where
$b_{1}:=\lambda\,\varphi-\lambda\,\rho\,b$ belongs to
$\mathcal{B}(G)$. Then
$\lambda\,h^{e}+(\rho\,+\alpha)f^{e}=\lambda\,b_{1}+\lambda^{2}(\rho\,+\alpha)b$,
which implies that the function $\lambda\, h^{e}+(\rho+\alpha)f^{e}$
belongs to $\mathcal{B}(G)$. This contradicts the fact that $f^{e}$
and $h^{e}$ are linearly independent modulo $\mathcal{B}(G)$ because
$\lambda\,\neq0$. Hence $f_{0}$ and $g_{0}$ are linearly independent
modulo $\mathcal{B}(G)$.
\par On the other hand let $\psi_{1}:=f^{o}$, $\psi_{2}:=g^{o}$ and $\psi_{3}:=h^{o}$.
The identity (\ref{eq3}) implies
\begin{equation*}\begin{split}&\varphi_{1}(x,y)=(-\lambda^{2}f_{0}(x)+\lambda^{2}b(x))\psi_{2}(y)
+(\frac{1+\rho^{2}}{2}f_{0}(x)+\rho\, g_{0}(x)+\frac{1-\rho^{2}}{2}b(x))\psi_{1}(y)\\
&+(\lambda\,\rho\, f_{0}(x)+\lambda\, g_{0}(x)-\lambda\,\rho\, b(x))\psi_{3}(y)\\
&=f_{0}(x)[-\lambda^{2}\psi_{2}(y)+\frac{1+\rho^{2}}{2}\psi_{1}(y)+\lambda\,\rho\,\psi_{3}(y)]+g_{0}(x)[\rho\,\psi_{1}(y)+\lambda\,\psi_{3}(y)]\\
&+b(x)[\lambda^{2}\psi_{2}(y)+\frac{1-\rho^{2}}{2}\psi_{1}(y)-\lambda\,\rho\,\psi_{3}(y)],\end{split}\end{equation*}
for all $x,y\in G$. So, taking (\ref{eq13}) into account and that
the functions $\psi$, $b$, $\psi_{1}$, $\psi_{2}$ and $\psi_{3}$ are
bounded, we deduce from the identity above that the function
$$x\mapsto f_{0}(x)[-\lambda^{2}\,\psi_{2}(y)+\frac{1+\rho^{2}}{2}\psi_{1}(y)+\lambda\,\rho\,\psi_{3}(y)]+g_{0}(x)[\rho\,\psi_{1}(y)+\lambda\,\psi_{3}(y)]$$
belongs to $\mathcal{B}(G)$ for all $y\in G$. As $f_{0}$ and $g_{0}$
are linearly independent modulo $\mathcal{B}(G)$ we get that
$$-\lambda^{2}\psi_{2}(y)+\frac{1+\rho^{2}}{2}\psi_{1}(y)+\lambda\,\rho\,\psi_{3}(y)=0$$
and
$$\rho\,\psi_{1}(y)+\lambda\,\psi_{3}(y)=0$$
for all $y\in G$, from which we get by a small computation that
$\psi_{2}=\frac{1-\rho^{2}}{2\lambda^{2}}\psi_{1}$ and
$\psi_{3}=-\frac{\rho\,}{\lambda\,}\psi_{1}$. As
$f=f^{e}+f^{o}=f^{e}+\psi_{1}$,
$g=g^{e}+g^{o}=g^{e}+\psi_{2}=g^{e}+\frac{1-\rho^{2}}{2\lambda^{2}}\psi_{1}$
and
$h=h^{e}+h^{o}=h^{e}+\psi_{3}=h^{e}+\frac{1-\rho^{2}}{2\lambda^{2}}\psi_{1}$,
we deduce that
\begin{center}
\[(I) \left\{
\begin{array}{r c l}
f&=&-\lambda^{2}\,f_{0}+\lambda^{2}\,b+\psi_{1}\\
g&=&\frac{1+\rho^{2}}{2}\,f_{0}+\rho\,g_{0}+\frac{1-\rho^{2}}{2}\,b+\frac{1-\rho^{2}}{2\lambda^{2}}\psi_{1}\\
h&=&\lambda\,\rho\,f_{0}+\lambda\,g_{0}-\lambda\,\rho\,b-\frac{\rho\,}{\lambda\,}\psi_{1}
\end{array}
\right.
\]
\end{center}
Moreover, since $f^{e}$, $g^{e}$ and $h^{e}$ are even functions, and $\psi_{1}=f^{o}$, we get that\\
\[ \left\{
\begin{array}{r c l}
\psi_{1}(-x)&=&-\psi_{1}(x)\quad\quad\quad\quad\quad\quad\quad\quad\quad\quad\quad\quad\quad\quad\quad\quad\quad\quad\quad\quad\quad\quad\\
-f_{0}(-x)+b(-x)&=&-f_{0}(x)+b(x)\quad\quad\quad\quad\quad\quad\quad\quad\quad\quad\quad\quad\quad\quad\quad\quad\quad\quad\quad\quad\quad\quad\\
\frac{1}{2}f_{0}(-x)+\rho\,g_{0}(-x)+\frac{1}{2}b(-x)&=&\frac{1}{2}f_{0}(x)+\rho\,g_{0}(x)+\frac{1}{2}b(x)\quad\quad\quad\quad\quad\quad\quad\quad\quad\quad\quad\quad\quad\quad\quad\\
\rho\,f_{0}(-x)+g_{0}(-x)-\rho\,b(-x)&=&\rho\,f_{0}(x)+g_{0}(x)-\rho\,b(x),\quad\quad\quad\quad\quad\quad\quad\quad\quad\quad\quad\quad\quad\quad\quad\quad\quad\quad\quad
\end{array}
\right.
\]
which implies $f_{0}(-x)=f_{0}(x)$, $g_{0}(-x)=g_{0}(x)$,
$b(-x)=b(x)$ and $\psi_{1}(-x)=-\psi_{1}(x)$ for all $x\in G$. So we
obtain, by writing $b$ instead of
$b+\frac{1}{\lambda^{2}}\,\psi_{1}$ in $(I)$, the result (5) of
Theorem 4.1.\\
(2)\[ \left\{
\begin{array}{r c l}
f^{e}&=&\lambda^{2}\,M+a\,m+b,\quad\quad\quad\quad\quad\quad\quad\quad\quad\quad\quad\quad\quad\quad\\
g^{e}&=&\beta\lambda\,(1-\frac{1}{2}\beta\lambda\,)M+(1-\beta\lambda\,)m-\frac{1}{2}\beta^{2}\,a\,m-\frac{1}{2}\beta^{2}\,b,\quad\quad\quad\quad\quad\quad\quad\\
h^{e}&=&\lambda\,(1-\beta\lambda\,)M-\lambda\,m-\beta\,a\,m-\beta\,b,\quad\quad\quad\quad\quad\quad\quad\quad\quad\quad\quad\quad
\end{array}
\right.
\]
where $m:G\rightarrow\mathbb{C}$ is a nonzero bounded multiplicative
function, $M:G\rightarrow\mathbb{C}$ is a non bounded multiplicative
function, $a:G\rightarrow\mathbb{C}$ is a nonzero additive function,
$b:G\rightarrow\mathbb{C}$ is a bounded function and
$\beta\in\mathbb{C},\,\lambda\in\mathbb{C}\setminus\{0\}$ are
constants. Then
$\beta\,f^{e}+h^{e}=\beta\lambda^{2}\,M+\beta\,a\,m+\beta\,b+\lambda\,(1-\beta\lambda\,)M-\lambda\,m-\beta\,a\,m-\beta\,b=\lambda\,(M-m)$.\\
So that
\begin{equation}\label{eq29}M(-x)-m(-x)=M(x)-m(x)\end{equation}
for all $x\in G$. Moreover, since $f^{e}$ and $g^{e}$ are even
functions, and $a(-x)+a(x)=a(-x+x)=a(e)=0$ for all $x\in G$, we get
that
\begin{equation}\label{eq30}\lambda^{2}\,M(-x)-a(x)\,m(-x)+b(-x)=\lambda^{2}\,M(x)+a(x)\,m(x)+b(x)\end{equation}
and
\begin{equation}\label{eq31}\begin{split}\beta\lambda\,(1-\frac{1}{2}\beta\lambda\,)M(-x)+(1-\beta\lambda\,)m(-x)+\frac{1}{2}\beta^{2}\,a(x)\,m(-x)-\frac{1}{2}\beta^{2}\,b(-x)\\
=\beta\lambda\,(1-\frac{1}{2}\beta\lambda\,)M(x)+(1-\beta\lambda\,)m(x)-\frac{1}{2}\beta^{2}\,a(x)\,m(x)-\frac{1}{2}\beta^{2}\,b(x),\end{split}\end{equation}
for all $x\in G$. By multiplying (\ref{eq30}) by
$\frac{1}{2}\beta^{2}$ and adding the result to (\ref{eq31}) we get
that
$$\beta\lambda\,(M(x)-m(x))-\beta\lambda\,(M(-x)-m(-x))+m(x)-m(-x)=0$$ for all $x\in
G$. We deduce, by taking (\ref{eq29}) into account, that
$m(-x)=m(x)$ and $M(-x)=M(x)$ for all $x\in G$. When we substitute
this back into (\ref{eq30}) we get that
$$-a(x)\,m(x)+b(-x)=a(x)\,m(x)+b(x)$$ for all $x\in G$. Hence
$a(x)=-b^{o}(x)\,m(-x)$ for all $x\in G$. As $b$ and $m$ are bounded
functions we derive that the additive function $a$ is bounded, so
$a(x)=0$ for all $x\in G$, which contradicts the condition on $a$.
So the present possibility dose not occur.\\
(3)\[ \left\{
\begin{array}{r c l}
f^{e}&=&\frac{1}{2}a^{2}\,m+\frac{1}{2}a_{1}\,m+b,\quad\quad\quad\quad\quad\quad\quad\quad\quad\quad\quad\quad\quad\quad\quad\\
g^{e}&=&-\frac{1}{4}\beta^{2}\,a^{2}\,m+\beta\,a\,m-\frac{1}{4}\beta^{2}\,a_{1}\,m+m-\frac{1}{2}\beta^{2}\,b,\quad\quad\quad\quad\quad\quad\quad\quad\\
h^{e}&=&-\frac{1}{2}\beta\,a^{2}\,m+a\,m-\frac{1}{2}\beta\,a_{1}\,m-\beta\,b,\quad\quad\quad\quad\quad\quad\quad\quad\quad\quad\quad\quad\quad
\end{array}
\right.
\]
where $m:G\rightarrow\mathbb{C}$ is a nonzero bounded multiplicative
function, $a,a_{1}:G\rightarrow\mathbb{C}$ are additive functions
such that $a$ is nonzero, $b:G\rightarrow\mathbb{C}$ is a bounded
function and $\beta\in\mathbb{C}$ is a constant.\\
Notice that $\beta\,f^{e}+h^{e}=a\,m$ and
$2g^{e}=\beta^{2}\,f^{e}+2\beta\,h^{e}+2m$, then $m$ and $a\,m$ are
even functions. As seen earlier we have $a(-x)=-a(x)$ for all $x\in
G$. Hence $-a(x)\,m(x)=a(x)\,m(x)$ for all $x\in G$, so $a=0$, which
contradicts the condition on $a$. We conclude that the present
possibility dose not occur.\\
(4)
$$f^{e}(x+y)=f^{e}(x)m(y)+m(x)f^{e}(y)+(a(x)m(x)+b(x))(a(y)m(y)+b(y))$$for
all $x,y\in G$,
$$g^{e}=-\frac{1}{2}\beta^{2}\,f^{e}+(1+\beta\,a)m+\beta\,b$$ and
$$h^{e}=-\beta\,f^{e}+a\,m+b,$$
where $m:G\rightarrow\mathbb{C}$ is a nonzero bounded multiplicative
function, $a:G\rightarrow\mathbb{C}$ is a nonzero additive function,
$b:G\rightarrow\mathbb{C}$ is a bounded function and
$\beta\in\mathbb{C}$ is a constant.
\par The second and the third identities above imply
$m=-\frac{1}{2}\beta^{2}\,f^{e}+g^{e}-\beta\,h^{e}$, from which we
deduce that $m(-x)=m(x)$ for all $x\in G$. Moreover the third
identity above implies that the function $a\,m+b$ is even. Since
$a(-x)=-a(x)$ for all $x\in G$, we get that
$-a(x)m(x)+b(-x)=a(x)m(x)+b(x)$ for all $x\in G$. Hence
$a=-b^{o}\,m$. As $b$ and $m$ are bounded functions and $a$ is an
additive function we deduce that $a=0$, which contradicts the
condition on $a$. We conclude that the present possibility dose not
occur.\\
(5) $f^{e}$, $g^{e}$ and $h^{e}$ satisfy the functional equation
\begin{equation}\label{eq32}f^{e}(x+y)=f^{e}(x)g^{e}(y)+g^{e}(x)f^{e}(y)+h^{e}(x)h^{e}(y)\end{equation}
for all $x,y\in G$.
\par If $f^{o}=0$ then $f^{e}=f$. Moreover, taking into account that $f^{e}$ and $h^{e}$
are linearly independent, we derive from (\ref{eq3}) that $g^{o}=0$
and $h^{o}=0$, hence $g^{e}=g$ and $h^{e}=h$. So the functional
equation (\ref{eq32}) becomes $f(x-y)=f(x)g(y)+g(x)f(y)+h(x)h(y)$
for all $x,y\in G$. This is the result (8) of Theorem 4.1.
\par If $f^{o}\neq0$ then, according to (\ref{eq3}), there exist two constants
$\alpha,\beta\in\mathbb{C}$ and an even function
$b\in\mathcal{B}(G)$ such that
\begin{equation}\label{eq33}g^{e}=\alpha\,f^{e}+\beta\,h^{e}+b.\end{equation}
By substituting (\ref{eq33}) into (\ref{eq32}) we get, by a similar
computation to the one of Case A of the proof of \cite[Lemma
3.4]{Ajebbar and Elqorachi3}, that
\begin{equation}\label{eq34}\begin{split}f^{e}(x+y)&=(2\alpha-\beta^{2})f^{e}(x)f^{e}(y)+f^{e}(x)b(y)+b(x)f^{e}(y)\\
&+[\beta\,f^{e}(x)+h^{e}(x)][\beta\,f^{e}(y)+h^{e}(y)]\end{split}\end{equation}
for all $x,y\in G$. We have the following subcases:\\
\underline{Subcase B.1.1}: $2\alpha\neq\beta^{2}$. Proceeding
exactly as in Subcase A.1 of the proof of \cite[Lemma 3.4]{Ajebbar
and Elqorachi3} we get that
\[ \left\{
\begin{array}{r c l}
f^{e}&=&-\lambda^{2}f_{0}+\lambda^{2}b,\quad\quad\quad\quad\quad\quad\quad\quad\quad\quad\quad\quad\quad\quad\quad\quad\quad\quad\quad\quad\quad\quad\\
g^{e}&=&\frac{1+\rho^{2}}{2}f_{0}+\rho\, g_{0}+\frac{1-\rho^{2}}{2}b,\quad\quad\quad\quad\quad\quad\quad\quad\quad\quad\quad\quad\quad\quad\quad\\
h^{e}&=&\lambda\,\rho\, f_{0}+\lambda\, g_{0}-\lambda\,\rho\,
b.\quad\quad\quad\quad\quad\quad\quad\quad\quad\quad\quad\quad\quad\quad\quad\quad\quad\quad\quad
\end{array}
\right.
\]
So we go back to the possibility (1) and then obtain the result (5)
of Theorem 4.1.\\
\underline{Subcase B.1.1}: $2\alpha=\beta^{2}$.\\
By similar computations to the ones in Subcase A.1 of the proof of
\cite[Lemma 3.4]{Ajebbar and Elqorachi3} we get that there exist a
constant $\eta\in\mathbb{C}$ such that
\begin{equation}\label{eq35}H(x+y)=H(x)m(y)+m(x)H(y)+\eta\,H(x)H(y)\end{equation}
for all $x,y\in G$\\ and
\begin{equation}\label{eq36}b=m\end{equation} where $\eta\in\mathbb{C}$, $H:=\beta\,f^{e}+h^{e}$ and
$m\in\mathcal{B}(G)$ is an even multiplicative function.
\par If $\eta=0$ then $H$ satisfies the functional equation
$$H(x+y)=H(x)m(y)+m(x)H(y)$$ for all $x,y\in G$. As $f^{e}$ and
$h^{e}$ are linearly independent modulo $\mathcal{B}(G)$ we have
$H\neq0$, hence $m$ is a nonzero multiplicative function on the
group $G$. So, from the functional equation above we deduce that
there exists an additive function $a:G\rightarrow\mathbb{C}$  such
that $H=a\,m$. Since $H$ is even so is $a$, hence $a=0$ which
contradicts the fact that $H\neq0$.
\par If $\eta\neq0$ then, by multiplying both sides of (\ref{eq35})
by $\eta$ and adding $m(x+y)$ to both sides of the obtained
identity, we get, by a small computation, that
\begin{equation*}m(x+y)+\eta^{2}H(x+y)=[m(x)+\eta\,H(x)][m(y)+\eta\,H(y)]\end{equation*}
for all $x,y\in G$. So there exist an even multiplicative function
$M:G\rightarrow\mathbb{C}$ and a constant
$\lambda\in\mathbb{C}\backslash\{0\}$ such that $H=\lambda(M-m)$. By
substituting this into (\ref{eq34}) and taking (\ref{eq36}) into
account we obtain
\begin{equation*}\begin{split}f^{e}(x+y)&=f^{e}(x)m(y)+m(x)f^{e}(y)+\lambda^{2}(M(x)-m(x))(M(y)-m(y))\\
&=f^{e}(x)m(y)+m(x)f^{e}(y)+\lambda^{2}M(x+y)-\lambda^{2}M(x)m(y)-\lambda^{2}m(x)M(y)\\&+\lambda^{2}m(x+y)\end{split}\end{equation*}
for all $x,y\in G$. Since $m$ is a nonzero multiplicative function
on the group $G$ we have $m(x)\neq0$ for all $x\in G$. So, by
dividing both sides of the functional equation above we get that
$$\dfrac{f^{e}(x+y)-\lambda^{2}M(x+y)}{m(x+y)}+\lambda^{2}=[\dfrac{f^{e}(x)-\lambda^{2}M(x)}{m(x)}+\lambda^{2}]+[\dfrac{f^{e}(y)-\lambda^{2}M(y)}{m(y)}+\lambda^{2}]$$
for all $x,y\in G$, hence there exists an additive function
$a:G\rightarrow\mathbb{C}$ such that
$\dfrac{f^{e}(x)-\lambda^{2}M(x)}{m(x)}+\lambda^{2}=a(x)$ for all
$x\in G$. Since $f^{e}$, $M$ and $m$ are even functions so is the
additive function $a$, then $a(x)=0$ for all $x\in G$.
Hence $f^{e}=\lambda^{2}(M-m)$. Then
$f^{e}=\lambda\,H=\lambda\beta\,f^{e}+\lambda\,h^{e}$, which
contradicts the linear independence modulo $\mathcal{B}(G)$ of
$f^{e}$ and $h^{e}$. We conclude that the Subcase B.1.1 does not occur. \\
\underline{Subcase B.2}: $h^{o}\not\in\mathcal{B}(G)$. Since
$\mathcal{B}(G)$ is a two-sided invariant and $(-I)$-invariant
linear space of complex-valued functions on $G$, then we deduce,
according to Lemma 3.2, that $h^{e}=\gamma\,f^{e}$ and
$g^{o}=-\gamma\,h^{o}-\eta\,f^{o}$, where $\gamma,\eta\in\mathbb{C}$
are two constants. We split the discussion into the cases $\gamma=0$
and $\gamma\neq0$.\\
\underline{Subcase B.2.1}: $\gamma=0$. Then, from Lemma 3.1(1),
(\ref{eq17}) and (\ref{eq18}), we deduce that $h^{o}=h$ and
$g^{o}\in\mathcal{B}(G)$. So we get, from the identities (\ref{eq5})
and (\ref{eq6}), that
\begin{equation*}
\begin{split}f(x+y)&=f(x)g(y)+g(x)f(y)+h(x)h(y)-2f^{o}(x)g^{o}(y)-2g^{o}(x)f^{o}(y)\\
&-2h(x)h(y)+\psi(x,y)+\varphi_{3}(x,y)\\
&=[f^{e}(x)+f^{o}(x)][g^{e}(y)+g^{o}(y)]+[g^{e}(x)+g^{o}(x)][f^{e}(y)+f^{o}(y)]\\
&-h(x)h(y)-2f^{o}(x)g^{o}(y)-2g^{o}(x)f^{o}(y)+\psi(x,y)+\varphi_{3}(x,y)\\
&=f^{e}(x)g^{e}(y)+g^{e}(x)f^{e}(y)-h(x)h(y)+(f^{e}(x)g^{o}(y)+g^{e}(x)f^{o}(y))\\
&+(g^{o}(x)f^{e}(y)+f^{o}(x)g^{e}(y))-f^{o}(x)g^{o}(y)-g^{o}(x)f^{o}(y)+\psi(x,y)\\
&+\varphi_{3}(x,y)\end{split}\end{equation*} for all $x,y\in G$.
Hence, taking into account that $h^{e}=0$, and by using (\ref{eq3})
and (\ref{eq16}), a small computation shows that
\begin{equation}\label{eq37}f^{e}(x+y)=f^{e}(x)g^{e}(y)+g^{e}(x)f^{e}(y)+k(x)k(y)+\Psi(x,y)\end{equation}
for all $x,y\in G$, where
\begin{equation}\label{eq38}k:=ih\end{equation} and
\begin{equation}\label{eq39}\Psi(x,y):=\psi(x,-y)+\varphi_{1}(y,x)-\varphi_{1}(x,y)-f^{o}(x+y)-f^{o}(x)g^{o}(y)-g^{o}(x)f^{o}(y)\end{equation}
for all $x,y\in G$. As the functions $f^{o},\,g^{o}$ and $\psi$ are
bounded we deduce, from (\ref{eq13}), (\ref{eq37}) and (\ref{eq39}),
that the function $$(x,y)\mapsto
f^{e}(x+y)-f^{e}(x)g^{e}(y)-g^{e}(x)f^{e}(y)-k(x)k(y)$$ is bounded.
Hence, according to Proposition 3.4 we have one of the following
possibilities:\\
(1)\[ \left\{
\begin{array}{r c l}
f^{e}&=&-\lambda^{2}\,f_{0}+\lambda^{2}\,b,\quad\quad\quad\quad\quad\quad\quad\quad\quad\quad\quad\quad\quad\quad\quad\quad\quad\quad\quad\quad\quad\quad\\
g^{e}&=&\frac{1+\rho^{2}}{2}\,f_{0}+\rho\,g_{0}+\frac{1-\rho^{2}}{2}\,b,\quad\quad\quad\quad\quad\quad\quad\quad\quad\quad\quad\quad\quad\quad\quad\\
k&=&\lambda\,\rho\,f_{0}+\lambda\,g_{0}-\lambda\,\rho\,b,\quad\quad\quad\quad\quad\quad\quad\quad\quad\quad\quad\quad\quad\quad\quad\quad\quad\quad\quad
\end{array}
\right.
\]
where $b:G\rightarrow\mathbb{C}$ is a bounded function,
$\rho\in\mathbb{C},\,\lambda\in\mathbb{C}\setminus\{0\}$ are
constants and $f_{0},g_{0}:G\rightarrow\mathbb{C}$ are functions
satisfying the cosine functional equation
$$f_{0}(x+y)=f_{0}(x)f_{0}(y)-g_{0}(x)g_{0}(y),\,\,x,y\in G.$$
\par Since $f^{e}$ and $g^{e}$ are even functions, $k$ is an odd
function and $\lambda\,\neq0$ we get that
\begin{equation}\label{eq40}f_{0}(-x)-b(-x)=f_{0}(x)-b(x),\end{equation}
\begin{equation}\label{eq41}f_{0}(-x)+2\rho\,g_{0}(-x)+b(-x)=f_{0}(x)+2\rho\,g_{0}(x)+b(x)\end{equation}
and
\begin{equation}\label{eq42} \rho\,(f_{0}(-x)-b(-x))+g_{0}(-x)=-\rho\,(f_{0}(x)-b(x))-g_{0}(x)\end{equation}
for all $x\in G$. The identity (\ref{eq40}) implies
\begin{equation}\label{eq43} f_{0}^{o}=b^{o}\end{equation}
By using this and the identity
$k=\lambda\,\rho\,f_{0}+\lambda\,g_{0}-\lambda\,\rho\,b$, and taking
into account that $k$ is an odd function we obtain
\begin{equation}\label{eq44} k=\lambda\,g_{0}^{o}.\end{equation}
By multiplying both sides of (\ref{eq40}) by $\rho$ and subtracting
(\ref{eq42}) from the result we deduce that
\begin{equation}\label{eq45} g_{0}^{e}=-\rho\,(f_{0}-b).\end{equation}
Moreover, we derive from (\ref{eq41}) that
$2\rho\,(g_{0}(x)-g_{0}(-x))=-(f_{0}(x)-f_{0}(-x))-(b(x)-b(-x))$ for
all $x\in G$, which implies, by taking (\ref{eq43}) into account,
that
\begin{equation}\label{eq46} \rho\,g_{0}^{o}=-b^{o}.\end{equation}
From (\ref{eq44}), (\ref{eq46}) and (\ref{eq38}) we get that
\begin{equation}\label{eq47} \rho\,h=\lambda\,ib^{o}.\end{equation}
Since $b$ is a bounded function on $G$ we deduce from (\ref{eq47})
that $\rho\,h$ is. As $h\not\in\mathcal{B}(G)$ we get that $\rho=0$.
It follows that
\begin{center}
\[(II) \left\{
\begin{array}{r c l}
f^{e}&=&-\lambda^{2}\,f_{0}+\lambda^{2}\,b,\\
g^{e}&=&\frac{1}{2}f_{0}+\frac{1}{2}\,b,\\
k&=&\lambda\,g_{0}.
\end{array}
\right.
\]
\end{center}
\par Let $\psi_{1}:=g^{o}$ and $\psi_{2}:=f^{o}$. By using that
$h^{e}=0$, (\ref{eq3}), the first and the second identities in
$(II)$ we obtain
\begin{equation*}\begin{split}&\varphi_{1}(x,y)=(-\lambda^{2}\,f_{0}(x)+\lambda^{2}\,b(x))\psi_{1}(y)+(\frac{1}{2}\,f_{o}(x)+\frac{1}{2}\,b(x))\psi_{2}(y)\\
&=f_{0}(x)[-\lambda^{2}\,\psi_{1}(y)+\frac{1}{2}\psi_{2}(y)]+b(x)[\lambda^{2}\,\psi_{1}(y)+\frac{1}{2}\psi_{2}(y)],\end{split}\end{equation*}
for all $x,y\in G$. So, taking (\ref{eq13}) into account and that
the functions $\psi$, $b$, $\psi_{1}$ and $\psi_{2}$ are bounded, we
deduce from the identity above that the function $$x\mapsto
f_{0}(x)[-\lambda^{2}\,\psi_{1}(y)+\frac{1}{2}\psi_{2}(y)]$$ belongs
to $\mathcal{B}(G)$ for all $y\in G$. Since
$f^{e}=-\lambda^{2}\,f_{0}+\lambda^{2}\,b$,
$f^{e}\not\in\mathcal{B}(G)$ and $b\in\mathcal{B}(G)$ we deduce that
$f_{0}\not\in\mathcal{B}(G)$. Hence
$-\lambda^{2}\,\psi_{1}(y)+\frac{1}{2}\psi_{2}(y)=0$ for all $y\in
G$, which implies $\psi_{2}=2\lambda^{2}\,\psi_{1}$. As
$f=f^{e}+f^{o}=f^{e}+\psi_{2}=f^{e}+2\lambda^{2}\,\psi_{1}$,
$g^{e}+g^{o}=g^{e}+\psi_{1}$ we deduce, taking (\ref{eq38}) and
$(II)$ into account, that
\begin{center}
\[(III)\left\{
\begin{array}{r c l}
f&=&-\lambda^{2}\,f_{0}+\lambda^{2}\,b+2\lambda^{2}\,\psi_{1},\\
g&=&\frac{1}{2}f_{0}+\frac{1}{2}\,b+\psi_{1},\\
h&=&-\lambda i\,g_{0}.
\end{array}
\right.
\]
\end{center}
On the other hand, we get from the identities (\ref{eq46}),
(\ref{eq43}), (\ref{eq45}) and $\psi_{1}=g^{o}$, that $b(-x)=b(x)$,
$f_{0}(-x)=f_{0}(x)$, $g_{0}(-x)=-g_{0}(x)$ and
$\psi_{1}(-x)=-\psi_{1}(x)$ for all $x\in G$, and
$\psi_{1}\in\mathcal{B}(G)$. So we obtain, by writing $b$ and
$\lambda$ instead of $b+2\psi_{1}$ and $-\lambda i$ respectively in
$(III)$, the result (6) of Theorem 4.1.\\
(2)\[ \left\{
\begin{array}{r c l}
f^{e}&=&\lambda^{2}\,M+am+b,\quad\quad\quad\quad\quad\quad\quad\quad\quad\quad\quad\quad\quad\quad\\
g^{e}&=&\beta\lambda(1-\frac{1}{2}\beta\lambda)M+(1-\beta\lambda)m-\frac{1}{2}\beta^{2}\,a\,m-\frac{1}{2}\beta^{2}\,b,\quad\quad\quad\quad\quad\quad\quad\\
k&=&\lambda(1-\beta\lambda)M-\lambda\,m-\beta\,a\,m-\beta\,b,\quad\quad\quad\quad\quad\quad\quad\quad\quad\quad\quad\quad
\end{array}
\right.
\]
where $m:G\rightarrow\mathbb{C}$ is a nonzero bounded multiplicative
function, $M:G\rightarrow\mathbb{C}$ is a non bounded multiplicative
function, $a:G\rightarrow\mathbb{C}$ is a nonzero additive function,
$b:G\rightarrow\mathbb{C}$ is a bounded function and
$\beta\in\mathbb{C},\,\lambda\in\mathbb{C}\setminus\{0\}$ are
constants.
\par We have $\beta\,k=-\frac{1}{2}\beta^{2}\,f^{e}+g^{e}-m$,
which implies, taking into account that $k$ is an odd function, that
$\beta\,k=-m^{o}$. Hence $\beta\,k\in\mathcal{B}(G)$. As
$k\not\in\mathcal{B}(G)$ we get that $\beta=0$. Then $g^{e}=m$ and
$k=\lambda(M-m)$. Since $\lambda\neq0$ we get that $m(-x)=m(x)$ and
$M(-x)-m(-x)=-M(x)+m(x)$ for all $x\in G$. So that
$2m(x)=M(-x)+M(x)$ for all $x\in G$. Since $m$ and $M$ are
multiplicative functions we deduce, according to \cite[Corollary
3.19]{Stetkaer}, that $m=M$, which contradicts the conditions
$m\in\mathcal{B}(G)$ and $M\not\in\mathcal{B}(G)$. Thus
the present possibility does not occur.\\
(3)\[ \left\{
\begin{array}{r c l}
f^{e}&=&\frac{1}{2}a^{2}\,m+\frac{1}{2}a_{1}\,m+b,\quad\quad\quad\quad\quad\quad\quad\quad\quad\quad\quad\quad\quad\quad\quad\\
g^{e}&=&-\frac{1}{4}\beta^{2}\,a^{2}\,m+\beta\,a\,m-\frac{1}{4}\beta^{2}\,a_{1}\,m+m-\frac{1}{2}\beta^{2}\,b,\quad\quad\quad\quad\quad\quad\quad\quad\\
k&=&-\frac{1}{2}\beta\,a^{2}\,m+a\,m-\frac{1}{2}\beta\,a_{1}\,m-\beta\,b,\quad\quad\quad\quad\quad\quad\quad\quad\quad\quad\quad\quad\quad
\end{array}
\right.
\]
where $m:G\rightarrow\mathbb{C}$ is a nonzero bounded multiplicative
function, $a,a_{1}:G\rightarrow\mathbb{C}$ are additive functions
such that $a$ is nonzero, $b:G\rightarrow\mathbb{C}$ is a bounded
function and $\beta\in\mathbb{C}$ is a constant.
\par Notice that $\beta\,k=-\frac{1}{2}\beta^{2}\,f^{e}+g^{e}-m$. As in the
possibility above we get that $\beta=0$. Hence we obtain
\begin{center}
\[(IV) \left\{
\begin{array}{r c l}
f^{e}&=&\frac{1}{2}a^{2}\,m+\frac{1}{2}a_{1}\,m+b,\\
g^{e}&=&m,\\
k&=&a\,m.
\end{array}
\right.
\]
\end{center}
From the second identity of $(IV)$ we deduce that $m(-x)=m(x)$ for
all $x\in G$. As $f^{e}(-x)=f^{e}(x)$, $a(-x)=-a(x)$ and
$a_{1}(-x)=-a_{1}(x)$ for all $x\in G$, we deduce from the first
identity of $(IV)$ that
$\frac{1}{2}a^{2}(x)m(x)-\frac{1}{2}a_{1}(x)m(x)+b(-x)=\frac{1}{2}a^{2}(x)m(x)+\frac{1}{2}a_{1}(x)m(x)+b(x)$
for all $x\in G$. So $a_{1}(x)m(x)=b(x)-b(-x)$ for all $x\in G$,
from which we get, taking into account that $m(-x)=m(x)$ for all
$x\in G$ and $m$ is a nonzero multiplicative function on the group
$G$, that $a_{1}=-2mb^{o}$. As $m,b\in\mathcal{B}(G)$ and $a_{1}$ is
an additive function we deduce that $a_{1}=0$ and $b(-x)=b(x)$ for
all $x\in G$. Hence the first identity of $(IV)$ becomes
$f^{e}=\frac{1}{2}a^{2}\,m+b$. So, taking into account that
$g^{e}=m$ and $h^{e}=0$, the identity (\ref{eq3}) becomes
\begin{equation*}\begin{split}&\varphi_{1}(x,y)=[\frac{1}{2}a^{2}(x)m(x)+b(x)]g^{o}(y)+m(x)f^{o}(y)\\
&=\frac{1}{2}a^{2}(x)m(x)g^{o}(y)+b(x)g^{o}(y)+m(x)f^{o}(y),\end{split}\end{equation*}
for all $x,y\in G$. As the functions $m$, $b$, $g^{o}$ and $f^{o}$
are bounded and $m$ is a nonzero multiplicative function on the
group $G$, we deduce from the identity above that the function
$$x\mapsto a^{2}(x)g^{o}(y)$$ belongs to $\mathcal{B}(G)$ for all $y\in
G$. Since $a^{2}$ is a non bounded function, because $a$ is a
nonzero additive function on $G$, we deduce that $g^{o}=0$. We infer
from $(IV)$, taking (\ref{eq38}) into account, and using that
$f=f^{e}+f^{o}$ and $g=g^{e}+g^{o}$, that
\begin{center}
\[\left\{
\begin{array}{r c l}
f&=&\frac{1}{2}a^{2}\,m+b+f^{o},\\
g&=&m,\\
h&=&-ia\,m.
\end{array}
\right.
\]
\end{center}
By writing $b$ instead of $b+f^{o}$ in the identities above we
obtain the result $(7)$ of Theorem 4.1.\\
(4) $f^{e}$ satisfies the functional equation
\begin{equation}\label{eq48}f^{e}(x+y)=f^{e}(x)m(y)+m(x)f^{e}(y)+(a(x)m(x)+b(x))(a(y)m(y)+b(y))\end{equation}
for all $x,y\in G$,
$$g^{e}=-\frac{1}{2}\beta^{2}\,f^{e}+(1+\beta\,a)m+\beta\,b$$ and
$$k=-\beta\,f^{e}+a\,m+b,$$
where $m:G\rightarrow\mathbb{C}$ is a nonzero bounded multiplicative
function, $a:G\rightarrow\mathbb{C}$ is a nonzero additive function,
$b:G\rightarrow\mathbb{C}$ is a bounded function and
$\beta\in\mathbb{C}$ is a constant.
\par A small computation shows that
$\beta\,k=-\frac{1}{2}\beta^{2}\,f^{e}+g^{e}-m$. So, as in the
possibility (2), we have $\beta=0$. Hence
\begin{equation}\label{eq49}g^{e}=m\end{equation}
and
\begin{equation}\label{eq50}k=a\,m+b.\end{equation}
From (\ref{eq48}) and (\ref{eq50}) we deduce that $f^{e}$ and $k$
satisfy the functional equation
$$f^{e}(x+y)=f^{e}(x)m(y)+m(x)f^{e}(y)+k(x)k(y).$$
As $a$ is a nonzero additive function, $m$ is a nonzero
multiplicative bounded function and $b$ is bounded we derive from
(\ref{eq50}) that $k\neq0$. Moreover $k(-x)=-k(x)$ for all $x\in G$,
and from (\ref{eq49}) we get that $m(-x)=m(x)$ for all $x\in G$.
Hence, according to Proposition 3.3, $f^{e}$ and $k$ are the forms
\begin{equation}\label{eq51}f^{e}=\frac{1}{2}A^{2}m\end{equation}
and
\begin{equation}\label{eq52}k=A\,m,\end{equation}
where $A:G\rightarrow\mathbb{C}$ is a nonzero additive function. It
follows, from (\ref{eq50}), (\ref{eq52}) and that $m(-x)=m(x)$ for
all $x\in G$, that $A-a=b\,m$. Hence, $A-a$ is a bounded additive
function. So $A=a$ and $b=0$. We deduce, taking (\ref{eq51}) and
(\ref{eq52}) into account, that
\begin{equation}\label{eq53}f^{e}=\frac{1}{2}a^{2}m.\end{equation}
and
\begin{equation}\label{eq54}k=a\,m.\end{equation}
Moreover, since the functions are $m$ and $\psi$ bounded, we deduce
by using (\ref{eq3}), (\ref{eq13}) and (\ref{eq49}), that the
function $x\rightarrow f^{e}(x)g^{o}(y)$ belongs to $\mathcal{B}(G)$
for all $y\in G$. As seen earlier, we have
$f^{e}\not\in\mathcal{B}(G)$. Hence
\begin{equation}\label{eq55}g^{o}=0.\end{equation}
So, by using (\ref{eq38}), (\ref{eq49}), (\ref{eq53}), (\ref{eq54})
and (\ref{eq55}), and taking into account that
$f^{o}\in\mathcal{B}(G)$, we conclude, by writing $b$ instead of
$f^{o}$, that
\begin{center}
\[ \left\{
\begin{array}{r c l}
f&=&\frac{1}{2}a^{2}\,m+b,\\
g&=&m,\\
h&=&-ia\,m.
\end{array}
\right.
\]
\end{center}
The result occurs in (7) of Theorem 4.1.\\
(5) $f^{e}$, $g^{e}$ and $k$ satisfy the functional equation
\begin{equation}\label{eq56}f^{e}(x+y)=f^{e}(x)g^{e}(y)+g^{e}(x)f^{e}(y)+k(x)k(y)\end{equation}
for all $x,y\in G$.
\par If $f^{o}=0$ then $f^{e}=f$. Moreover we derive from (\ref{eq18}) that $g^{e}=g$.
So, by using (\ref{eq38}), the functional equation (\ref{eq56})
becomes $f(x+y)=f(x)g(y)+g(x)f(y)-h(x)h(y)$ for all $x,y\in G$. As
$h=h^{o}$ we derive that $f$, $g$ and $h$ satisfy the functional
equation $f(x-y)=f(x)g(y)+g(x)f(y)+h(x)h(y)$ for all $x,y\in G$.
This is the result (8) of Theorem 4.1.
\par If $f^{o}\neq0$ then, according to (\ref{eq3}), there exist a
constant $\eta\in\mathbb{C}$ and an even function
$\varphi\in\mathcal{B}(G)$ such that
\begin{equation*}g^{e}=\eta\,f^{e}+\varphi.\end{equation*}
Substituting this into (\ref{eq56}) we obtain
\begin{equation}\label{eq57}f^{e}(x+y)=2\eta\,f^{e}(x)f^{e}(y)+f^{e}(x)\varphi(y)+\varphi(x)f^{e}(y)+k(x)k(y)\end{equation}
for all $x,y\in G$.
\par If $\eta=0$, then the functional equation (\ref{eq57}) can
be written
\begin{equation}\label{eq58}f^{e}(x+y)=f^{e}(x)\varphi(y)+\varphi(x)f^{e}(y)+k(x)k(y)\end{equation}
for all $x,y\in G$. Notice that $\varphi\neq0$. Indeed, if
$\varphi=0$ then we get, by putting $y=e$ in (\ref{eq58}) and taking
(\ref{eq38}) into account, that $f^{e}(x)+h(x)h(e)=0$ for all $x\in
G$. Since $h=h^{o}$ we have $h(e)=0$. Hence $f^{e}(x)=0$ for all
$x\in G$, and then $f=f^{o}$, which implies $f\in\mathcal{B}(G)$ and
contradicts that $f$ and $h$ are linearly independent modulo
$\mathcal{B}(G)$. Moreover we derive from (\ref{eq58}), according to
\cite[Lemma 3.2]{Ajebbar and Elqorachi3}, that $\varphi$ is a
multiplicative function because $f^{e}$ and $k$ are linearly
independent modulo $\mathcal{B}(G)$ and $\varphi\in\mathcal{B}(G)$.
Let $m:=\varphi$. Then the functional equation (\ref{eq58}) becomes
$$f^{e}(x+y)=f^{e}(x)m(y)+m(x)f^{e}(y)+k(x)k(y)$$ for all $x,y\in
G$. Since $f^{e}$ is an even function, $m$ a nonzero multiplicative
function on the group $G$ such that
$m(-x)=\varphi(-x)=\varphi(x)=m(x)$ for all $x\in G$, and $k$ an odd
function we deduce, according to Proposition 3.3, that
$f^{e}=\frac{1}{2}a^{2}m$ and $k=a\,m$ where
$a:G\rightarrow\mathbb{C}$ is a nonzero additive function. So,
taking (\ref{eq38}), (\ref{eq18}) and (\ref{eq19}) into account, and
using that $f^{o}\in\mathcal{B}(G)$, $\gamma=\eta=0$ and
$\varphi=m$, we derive, by putting $b=f^{o}$, that
\begin{center}
\[ \left\{
\begin{array}{r c l}
f&=&\frac{1}{2}a^{2}\,m+b,\\
g&=&m,\\
h&=&-ia\,m.
\end{array}
\right.
\]
\end{center}
This is the result (7) of Theorem 4.1.
\par If $\eta\neq0$, let $\lambda\in\mathbb{C}\setminus\{0\}$ such that $\lambda^{2}=\frac{1}{2\eta}$. The functional equation (\ref{eq57})
can be written, by multiplying both sides by $\frac{1}{\lambda^{2}}$
and adding $\varphi(x+y)$ to the obtained functional equation, as
follows
\begin{equation*}\begin{split}&\frac{1}{\lambda^{2}}\,f^{e}(x+y)+\varphi(x+y)=
[\frac{1}{\lambda^{2}}\,f^{e}(x)+\varphi(x)][\frac{1}{\lambda^{2}}\,f^{e}(y)+\varphi(y)]+\frac{1}{\lambda^{2}}\,k(x)k(y)\\
&+\varphi(x+y)-\varphi(x)\varphi(y)\end{split}\end{equation*} for
all $x,y\in G$. As $\varphi\in\mathcal{B}(G)$ we get that the
function
$$x\mapsto\frac{1}{\lambda^{2}}\,f^{e}(x+y)+\varphi(x+y)-[\frac{1}{\lambda^{2}}\,f^{e}(x)+\varphi(x)][\frac{1}{\lambda^{2}}\,f^{e}(y)+\varphi(y)]-\frac{1}{\lambda^{2}}\,k(x)k(y)$$
belongs to the two-sided invariant linear space $\mathcal{B}(G)$ for
all $y\in G$. Since the functions $f^{e}$ and $h$ are linearly
independent modulo $\mathcal{B}(G)$ so are
$\frac{1}{\lambda^{2}}\,f^{e}+\varphi$ and
$\frac{1}{\lambda^{2}}\,k$. Hence, according to \cite[Lemma
3.1]{Székelyhidi} and taking (\ref{eq38}) into account, the
functional equation
\begin{equation*}\frac{1}{\lambda^{2}}\,f^{e}(x+y)+\varphi(x+y)=
[\frac{1}{\lambda^{2}}\,f^{e}(x)+\varphi(x)][\frac{1}{\lambda^{2}}\,f^{e}(y)+\varphi(y)]-\frac{1}{\lambda^{2}}h(x)h(y)\end{equation*}
for all $x,y\in G$, is satisfied, from which we deduce that
\begin{center}
\[(V) \left\{
\begin{array}{r c l}
f^{e}&=&\lambda^{2}\,f_{0}-\lambda^{2}\varphi,\\
h&=&\lambda\,g_{0},
\end{array}
\right.
\]
\end{center}
where $f_{0}:=\frac{1}{\lambda^{2}}\,f^{e}+\varphi$ and
$g_{0}:=\frac{1}{\lambda\,}h$ satisfy the functional equation
$$f_{0}(x+y)=f_{0}(x)f_{0}(y)-g_{0}(x)g_{0}(y)$$ for all $x,y\in G$.
Moreover, since $\varphi$ is an even function and $h^{e}=0$ we get
easily that $f_{0}(-x)=f_{0}(x)$ and $g_{0}(-x)=-g_{0}(x)$ for all
$x\in G$. On the other hand, by taking into account that
$f=f^{e}+f^{o}$ and $g=g^{e}+g^{o}$, and by using (\ref{eq18}),
(\ref{eq19}) and $(V)$, we derive by an elementary computation that
\begin{center}
\[\left\{
\begin{array}{r c l}
f&=&\lambda^{2}\,f_{0}-\lambda^{2}\,b,\\
g&=&\frac{1}{2}\,f_{0}+\frac{1}{2}\,b\\
h&=&\lambda\,g_{0},
\end{array}
\right.
\]
\end{center}
where $b:=\varphi-\frac{1}{\lambda^{2}}\,f^{o}$ is a bounded
function. The result occurs in (6) of Theorem 4.1.\\
\underline{Subcase B.2.2}: $\gamma\neq0$. Let $x,y\in G$ be
arbitrary. By substituting (\ref{eq17}) and (\ref{eq18}) in
(\ref{eq3}) we obtain by an elementary computation
\begin{equation}\label{eq59}\varphi_{1}(x,y)=[-\eta\, f^{e}(x)+g^{e}(x)]f^{o}(y).\end{equation}
On the other hand, since $f=f^{e}+f^{o}$ and $g=g^{e}+g^{o}$ the
identity (\ref{eq6}) can be written
\begin{equation*}\begin{split}&\psi(x,y)=f^{e}(x-y)-f^{e}(x)g^{e}(y)-g^{e}(x)f^{e}(y)-g^{e}(x)f^{o}(y)-f^{e}(x)g^{o}(y)\\
&-f^{e}(y)g^{o}(x)-f^{o}(x)g^{e}(y)-f^{o}(x)g^{o}(y)-g^{o}(x)f^{o}(y)-h(x)h(y)+f^{o}(x-y).\end{split}\end{equation*}
By using (\ref{eq18}) we obtain
\begin{equation*}\begin{split}&\psi(x,y)=f^{e}(x-y)-f^{e}(x)g^{e}(y)-g^{e}(x)f^{e}(y)-h(x)h(y)-g^{e}(x)f^{o}(y)\\
&-f^{e}(x)[-\gamma\, h^{o}(y)-\eta\, f^{o}(y)]-f^{e}(y)[-\gamma\, h^{o}(x)-\eta\, f^{o}(x)]-f^{o}(x)g^{e}(y)\\
&-f^{o}(x)[-\gamma\, h^{o}(y)-\eta\, f^{o}(y)]-f^{o}(y)[-\gamma\, h^{o}(x)-\eta\, f^{o}(x)]+f^{o}(x-y)\\
&=f^{e}(x-y)-f^{e}(x)g^{e}(y)-g^{e}(x)f^{e}(y)-h(x)h(y)+\gamma\, f^{e}(x)h^{o}(y)+\gamma\, f^{e}(y)h^{o}(x)\\
&+\gamma\, f^{o}(x)h^{o}(y)+\gamma\, h^{o}(x)f^{o}(y)+2\eta\, f^{o}(x)f^{o}(y)-[-\eta\, f^{e}(x)+g^{e}(x)]f^{o}(y)\\
&-[-\eta\,
f^{e}(y)+g^{e}(y)]f^{o}(x)+f^{o}(x-y),\end{split}\end{equation*}
from which we infer, by using that $h=h^{e}+h^{o}$, and taking
(\ref{eq17}) and (\ref{eq59}) account, that
\begin{equation*}\begin{split}&\psi(x,y)=f^{e}(x-y)-f^{e}(x)g^{e}(y)-g^{e}(x)f^{e}(y)-[h^{e}(x)+h^{o}(x)][h^{e}(y)+h^{o}(y)]\\
&+h^{e}(x)h^{o}(y)+h^{e}(y)h^{o}(x)+\gamma\, f^{o}(x)h^{o}(y)+\gamma\, h^{o}(x)f^{o}(y)-\varphi_{1}(x,y)-\varphi_{1}(y,x)\\
&+2\eta\,f^{o}(x)f^{o}(y)+f^{o}(x-y)\\
&=f^{e}(x-y)-f^{e}(x)g^{e}(y)-g^{e}(x)f^{e}(y)-h^{e}(x)h^{e}(y)-h^{o}(x)h^{o}(y)+\gamma\, f^{o}(x)h^{o}(y)\\
&+\gamma\, h^{o}(x)f^{o}(y)-\varphi_{1}(x,y)-\varphi_{1}(y,x)+2\eta\, f^{o}(x)f^{o}(y)+f^{o}(x-y)\\
&=f^{e}(x-y)-f^{e}(x)g^{e}(y)-g^{e}(x)f^{e}(y)-\gamma^{2}f^{e}(x)f^{e}(y)-h^{o}(x)h^{o}(y)\\
&+\gamma\,f^{o}(x)h^{o}(y)\ +\gamma\,
h^{o}(x)f^{o}(y)-\varphi_{1}(x,y)-\varphi_{1}(y,x)+2\eta\,
f^{o}(x)f^{o}(y)+f^{o}(x-y).\end{split}\end{equation*} So that
\begin{equation}\label{eq60}\begin{split}&f^{e}(x-y)-f^{e}(x)[g^{e}(y)+\frac{1}{2}\gamma^{2}f^{e}(y)]-[g^{e}(x)+\frac{1}{2}\gamma^{2}f^{e}(x)]f^{e}(y)\\
&-[h^{o}(x)-\gamma\, f^{o}(x)][h^{o}(y)-\gamma\, f^{o}(y)]\\
&=\psi(x,y)+\varphi_{1}(x,y)+\varphi_{1}(y,x)-(\gamma^{2}+2\eta)f^{o}(x)f^{o}(y)-f^{o}(x-y)\end{split}\end{equation}
for all $x,y\in G$. Let
\begin{equation}\label{eq61}F_{0}:=f^{e},\,G_{0}:=g^{e}+\frac{1}{2}\gamma^{2}f^{e},\,H_{0}:=h^{o}-\gamma\,f^{o}.\end{equation}
Since $f=f^{e}+f^{o}$, $g=g^{e}+g^{o}$ and $h=h^{e}+h^{o}$, we get
by putting $\delta=-\gamma$ and $\varphi=f^{o}$, and taking
(\ref{eq17}), (\ref{eq18}) and (\ref{eq61}) into account, that
\begin{center}
\[(VI)\left\{
\begin{array}{r c l}
f&=&F_{0}+\varphi,\\
g&=&-\frac{1}{2}\delta^{2}F_{0}+G_{0}+\delta\,H_{0}-(\eta+\delta^{2})\varphi,\\
h&=&-\delta\,F_{0}+H_{0}-\delta\,\varphi.
\end{array}
\right.
\]
\end{center}
If $\varphi=0$ the result (9) of Theorem 4.1 is obviously satisfied.
In the following we assume that $\varphi\neq0$. By using
(\ref{eq59}), the first identity and the second one in (\ref{eq60}),
and replacing $f^{o}$ by $\varphi$, we get, by a small computation,
that
\begin{equation*}\varphi_{1}(x,y)=-[(\eta+\frac{1}{2}\delta^{2})F_{0}(x)-G_{0}(x)]\varphi(y)\end{equation*}
for all $x,y\in G$. Since $f^{o}$ and $\psi$ are bounded functions,
we deduce, taking (\ref{eq13}) and the identity above into account,
that
\begin{equation}\label{eq62}(\eta+\frac{1}{2}\delta^{2})F_{0}-G_{0}\in\mathcal{B}(G),\end{equation}
and, from (\ref{eq60}) and (\ref{eq61}), we derive that the function
$$(x,y)\mapsto F_{0}(x-y)-F_{0}(x)G_{0}(y)-G_{0}(x)F_{0}(y)-H_{0}(x)H_{0}(y)$$
is bounded. Since $f$ and $h$ are linearly independent modulo
$\mathcal{B}(G)$, we deduce easily, by using the first and the third
identities in (\ref{eq60}), that $H_{0}$ and $F_{0}$ are because
$f^{o}\in\mathcal{B}(G)$ and $h^{o}\not\in\mathcal{B}(G)$. Moreover
we have $H_{0}^{o}=H_{0}$ and $H_{0}^{o}\not\in\mathcal{B}(G)$,
hence we go back to Subcase B.2.1. As $F_{0}$ and $G_{0}$ are
even functions we derive that we have the following subcases:\\
\underline{Subcase B.2.2.1:} $F_{0},\,G_{0},$ and $H_{0}$ are of the
form (6) with the same constraints. Then
$$F_{0}=\lambda^{2}f_{0}-\lambda^{2}b,\,G_{0}=\frac{1}{2}f_{o}+\frac{1}{2}b,\,H_{0}=\lambda\,g_{0},$$
where $\lambda\in\mathbb{C}\setminus\{0\}$ is a constant and
$b,f_{0},g_{0}:G\rightarrow\mathbb{C}$ are functions satisfying the
same constraints indicated in (6) of Theorem 4.1, unless to take
$b(-x)=b(x)$ for all $x\in G$, then a small computation shows, by
using (\ref{eq62}) and the formulas of $F_{0}$ and $G_{0}$, that
$[\frac{1}{2}-\lambda^{2}(\eta+\frac{1}{2}\delta^{2})]f_{0}\in\mathcal{B}(G)$.
As $F_{0}$ and $H_{0}$ are linearly independent modulo
$\mathcal{B}(G)$ and $b\in\mathcal{B}(G)$, we get
$f_{0}\not\in\mathcal{B}(G)$. So that
$\frac{1}{2}-\lambda^{2}(\eta+\frac{1}{2}\delta^{2})=0$ and then
$\eta=\frac{1}{2\lambda^{2}}-\frac{1}{2}\delta^{2}$. By substituting
this back into $(VI)$ we obtain the result (9) of Theorem 4.1 with
the constraint (i).\\
\underline{Subcase B.2.2.2:} $F_{0},\,G_{0},$ and $H_{0}$ are of the
form (7) with the same constraints. Then we get, taking into account
that $F_{0}(-x)=F_{0}(x)$ and $b(-x)=-b(x)$ for all $x\in G$, that
$b=0$. So that
$$F_{0}=\frac{1}{2}a^{2}\,m,\,G_{0}=m,\,H_{0}=-ia\,m,$$
where $m:G\rightarrow\mathbb{C}$ is a nonzero bounded multiplicative
function, $a:G\rightarrow\mathbb{C}$ is a nonzero additive function
such that $m(-x)=m(x)$ for all $x\in G$. By using (\ref{eq62}) and
the formulas of $F_{0}$ and $G_{0}$ we get, by an elementary
computation, that
$(\eta+\frac{1}{2}\delta^{2})a^{2}\in\mathcal{B}(G)$. Since $a$ is a
nonzero additive function we get that $a^{2}\not\in\mathcal{B}(G)$.
Hence $\eta=-\frac{1}{2}\delta^{2}$. By substituting this back into
$(VI)$ we obtain the result (9) of
Theorem 4.1 with the constraint (ii).\\
\underline{Subcase B.2.2.3:} $F_{0},\,G_{0},$ and $H_{0}$ satisfy
the functional equation in the result (8) of Theorem 4.1, i.e.,
$F_{0}(x-y)=F_{0}(x)G_{0}(y)+G_{0}(x)F_{0}(y)+H_{0}(x)H_{0}(y)$ for
all $x,y\in G$. Since $F_{0}$ and $G_{0}$ are even functions and
$H_{0}$, replacing $y$ by $-y$ yields to the functional equation
$$F_{0}(x+y)=F_{0}(x)G_{0}(y)+G_{0}(x)F_{0}(y)+(iH_{0}(x))(iH_{0}(y)).$$
From (\ref{eq62}) we derive that there exist a constant
$\alpha\in\mathbb{C}$ and a function $b_{0}\in\mathcal{B}(G)$ such
that $G_{0}=\frac{\alpha}{2}F_{0}+b_{0}$. So that the last
functional equation becomes
\begin{equation*}F_{0}(x+y)=\alpha\,F_{0}(x)F_{0}(y)+F_{0}(x)b_{0}(y)+b_{0}(x)F_{0}(y)+(iH_{0}(x))(iH_{0}(y)),\end{equation*}
for all $x,y\in G$. Hence, by using a similar idea used to solve
(\ref{eq57}) (see Subcase B.2.1(5)) we prove that:
\par If $\alpha=0$, then $F_{0}=\frac{1}{2}a^{2}\,m$, $G_{0}=m$ and $H_{0}=-ia\,m$,
where $m:G\rightarrow\mathbb{C}$ is a nonzero bounded multiplicative
function such that $m(-x)=m(x)$ for all $x\in G$, so we go back to
Subcase B.2.2.2 and obtain the result (9) of Theorem 4.1 with the
constraint (ii).
\par If $\alpha\neq0$, then
$F_{0}=\lambda^{2}\,f_{0}-\lambda^{2}\,b_{0}$,
$G_{0}=\frac{1}{2}\,f_{0}+\frac{1}{2}\,b$ and
$H_{0}=\lambda\,g_{0}$, where $b:G\rightarrow\mathbb{C}$ is a
bounded function, $\lambda\in\mathbb{C}\setminus\{0\}$ is a constant
and $f_{0},g_{0}:G\rightarrow\mathbb{C}$ are functions satisfying
the cosine functional equation
$f_{0}(x+y)=f_{0}(x)f_{0}(y)-g_{0}(x)g_{0}(y)$ for all $x,y\in G$,
such that $f_{0}(-x)=f_{0}(x)$, $g_{0}(-x)=-g_{0}(x)$ and
$b(-x)=-b(x) $ for all $x\in G$, so we go back to Subcase B.2.2.1
and obtain the result (9) of Theorem 4.1 with the constraint (i).
\par Conversely if $f,g$ and $h$ are of the
forms (1)-(9) in Theorem 4.1 we check by elementary computations
that the function $(x,y)\mapsto f(x-y)-f(x)g(y)-g(x)f(y)-h(x)h(y)$
is bounded. This completes the proof.
\end{proof}

\end{document}